\chardef\bslash=`\\ % p. 424, TeXbook
\newtheorem{thm}{Theorem}[section]
\newtheorem{cor}[thm]{Corollary}
\newtheorem{lem}[thm]{Lemma}
\newtheorem{prop}[thm]{Proposition}
\theoremstyle{definition}
\newtheorem{defn}{Definition}[section]
\theoremstyle{remark}
\newtheorem{rem}{Remark}[section]
\newcommand{\thmref}[1]{Theorem~\ref{#1}}
\newcommand{\lemref}[1]{Lemma~\ref{#1}}
\newcommand{\propref}[1]{Proposition~\ref{#1}}
\newcommand{\corref}[1]{Corollary~\ref{#1}}
\newcommand{\defnref}[1]{Definition~\ref{#1}}
\newcommand{\eval}[2][\right]{\relax
  \ifx#1\right\relax \left.\fi#2#1\rvert}
\def\lsd{\mbox{$\mathop{\mathrel{\vrule height 5pt
 depth 0pt}\joinrel\mathrel\times} $}}
 \def\R{{\bf{R}}}
\def\C{{\bf{C}}}
\def\N{{\bf{N}}}
\def\defequal{\stackrel{\mathrm{def}}{=}}
\def\ra{\rightarrow}
\def\tfae{the following conditions are equivalent:}
\title{Continuous bounded cocycles}
\author{Jean Renault}
\address{D\'epartment de Math\'ematiques, Universit\'e d'Orl\'eans,
45067 Orl\'eans, France}
\email{Jean.Renault@univ-orleans.fr}
\keywords{Cocycles. Groupoids. Equivariant Hilbert bundles. Continuous cohomology.}
\subjclass{Primary 22A22; Secondary 54H20, 43A65, 46L55.}
\begin{document}
\vskip5mm
\begin{abstract}
Let $G$ be a minimal locally compact groupoid with compact metrizable unit space and let $E$ be a continuous $G$-Hilbert bundle. We show that a bounded continuous cocycle $c: G\ra r^*E$ is necessarily a continuous coboundary. This is a groupoid version of a classical theorem of Gottschalk and Hedlund.\end{abstract} 

\maketitle
\markboth{Jean Renault}
{Bounded groupoid cocycles}

\renewcommand{\sectionmark}[1]{}

\section{Introduction.}

D. Coronel, A. Navas and M. Ponce have recently given in \cite{cnp:bdd orbits} a generalization of the classical Gottschalk-Hedlund theorem on bounded cocycles to affine isometric actions on a Hilbert space. The present work gives a groupoid version of their result. First, instead of a semigroup $\Gamma$ acting by continuous maps on a space $X$, our dynamical system is given by a locally compact groupoid $G$. Second, instead of a single Hilbert space, we consider a $G$-Hilbert bundle. Going from a dynamical system $(\Gamma, X)$ to a topological groupoid $G$ is only a minor variation. The main difficulty will be to pass from a constant Hilbert bundle to a continuous field of Hilbert spaces.

Let us first recall what the Gottschalk-Hedlund theorem \cite[Chapter 14]{gh:top dyn} says.

\begin{thm} {(\bf Gottschalk-Hedlund)}\label{Gottschalk-Hedlund} Let $T$ be a minimal continuous map on a compact space $X$ and let $f:X\ra\C$ be a continuous function. Then the following properties are equivalent:
\begin{enumerate}
\item there exists a continuous function $g:X\ra\C$ such that for all $x\in X$, $f(x)=g(x)-g(Tx)$;
\item there exists $x\in X$ and $M\in\R$ such that for all $n\in\N$,
$$|\sum_{i=0}^n f(T^i x)|\le M;$$
\item there exists $M\in\R$ such that for all $x\in X$ and for all $n\in\N$,
$$|\sum_{i=0}^n f(T^i x)|\le M.$$
\end{enumerate}
\end{thm}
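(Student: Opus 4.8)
The plan is to prove the cycle of implications $(i)\Rightarrow(iii)\Rightarrow(ii)\Rightarrow(i)$. The implication $(iii)\Rightarrow(ii)$ is immediate (any $x$ works), and $(i)\Rightarrow(iii)$ is the telescoping identity $\sum_{i=0}^n f(T^ix)=\sum_{i=0}^n\bigl(g(T^ix)-g(T^{i+1}x)\bigr)=g(x)-g(T^{n+1}x)$, whose modulus is at most $2\|g\|_\infty$. All of the difficulty is in $(ii)\Rightarrow(i)$, which I would treat by means of a skew product.

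Writing $f=u+iv$ with $u,v$ real-valued, condition $(ii)$ passes to $u$ and to $v$ separately, since $|\sum_{i=0}^n u(T^ix)|$ and $|\sum_{i=0}^n v(T^ix)|$ are both dominated by $|\sum_{i=0}^n f(T^ix)|$; and a continuous coboundary datum for $f$ is built from ones for $u$ and $v$. So I may assume $f\colon X\ra\R$. On $X\times\R$ I would consider the continuous map $\hat T(x,t)=(Tx,\,t+f(x))$, so that $\hat T^n(x,t)=\bigl(T^nx,\,t+\sum_{i=0}^{n-1}f(T^ix)\bigr)$. The point of this construction is that $f$ is a continuous coboundary precisely when the graph of some continuous $g$ is $\hat T$-invariant, since $\hat T\bigl(x,-g(x)\bigr)=\bigl(Tx,-g(x)+f(x)\bigr)$ equals $\bigl(Tx,-g(Tx)\bigr)$ exactly when $f(x)=g(x)-g(Tx)$. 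Hypothesis $(ii)$ says exactly that the forward orbit of the chosen point $(x_0,0)$ stays in $X\times[-M,M]$; let $Y$ be the closure of this orbit, a nonempty compact forward-invariant subset of $X\times\R$. Its projection $\pi(Y)\subseteq X$ is nonempty, closed and $T$-invariant, hence equals $X$ by minimality; so each fibre $Y_x=\{t:(x,t)\in Y\}$ is a nonempty compact subset of $[-M,M]$.

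The heart of the matter is to show that $Y$ is the graph of a continuous function. Set $g^+(x)=\max Y_x$, $g^-(x)=\min Y_x$ and $\sigma(x)=g^+(x)-g^-(x)=\operatorname{diam}Y_x$; compactness of $Y$ makes $g^+$ upper semicontinuous, $g^-$ lower semicontinuous, and $\sigma$ upper semicontinuous. Since $\hat T$ carries the fibre over $x$ into the fibre over $Tx$ by the translation $t\mapsto t+f(x)$, we get $Y_x+f(x)\subseteq Y_{Tx}$, whence $\sigma(Tx)\ge\sigma(x)$; the set on which the upper semicontinuous function $\sigma$ attains its maximum is then closed, nonempty and forward-invariant, hence all of $X$ by minimality, so $\sigma\equiv c$ is constant. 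Consequently $g^+=g^-+c$ is both upper and lower semicontinuous, i.e., continuous. Finally, applying $Y_x+f(x)\subseteq Y_{Tx}$ to the two endpoints of $Y_x$ yields $g^+(Tx)\ge g^+(x)+f(x)$ and $g^-(Tx)\le g^-(x)+f(x)$; subtracting and using $g^+-g^-\equiv c$ forces both inequalities to be equalities, so $g^+(Tx)=g^+(x)+f(x)$ for every $x$. Then $g:=-g^+$ is continuous and satisfies $f(x)=g(x)-g(Tx)$, which is $(i)$.

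The only real work is thus the passage from boundedness along a single orbit to a globally defined continuous solution, and the mechanism above sidesteps invariant measures and Birkhoff averages entirely, using instead minimality of $(X,T)$ twice and the elementary remark that a function which is upper semicontinuous and differs from a lower semicontinuous function by a constant is continuous. I expect this to be exactly the step that must be reworked in the groupoid and Hilbert-bundle setting: the ``$\max$'' and ``$\min$'' of a fibre of $\R$ will have to be replaced by a canonical point (a circumcenter) of a fibre of bounded convex sets in a continuous field of Hilbert spaces, and establishing its continuity is where the geometry of Hilbert space comes in.
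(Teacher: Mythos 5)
Your argument is correct: the telescoping for $(i)\Rightarrow(iii)$, the reduction to real-valued $f$, and above all the skew-product step are sound — the orbit closure $Y$ of $(x_0,0)$ under $\hat T(x,t)=(Tx,t+f(x))$ is compact and forward invariant, $\pi(Y)=X$ by minimality, the fibre-diameter function $\sigma$ is upper semicontinuous and satisfies $\sigma\circ T\ge\sigma$, so the (closed, nonempty, forward-invariant) set where it attains its maximum is all of $X$, making $\sigma$ constant, $g^{+}=g^{-}+c$ continuous, and the endpoint inequalities collapse to the exact equation $g^{+}\circ T=g^{+}+f$. This is, however, a genuinely different route from the one relevant to this paper: the paper does not reprove \thmref{Gottschalk-Hedlund} (it quotes it from Gottschalk--Hedlund, and its groupoid form from \cite{ren:approach}), and the machinery it develops for the general statement (\thmref{main}) replaces your max/min of fibres by a Zorn's-lemma minimal weakly compact, fiberwise convex, invariant subset of the bundle, whose fibres are singletons by uniform convexity of Hilbert space; this yields a \emph{weakly} continuous equivariant section, and norm continuity is then recovered by a separate Baire-category argument (the module of continuity, whose sublevel sets are open and invariant, plus minimality). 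Your proof is more elementary and self-contained, but it leans essentially on the total order of $\R$ (max and min of a fibre, and the real/imaginary-part reduction), which is exactly what is unavailable for Hilbert-bundle coefficients. Your closing guess that one should pass to a canonical centre of each fibre is natural, but note the paper's concluding remarks: the Chebyshev-centre (``lemma of the centre'') construction does give a candidate coboundary, yet its \emph{continuity} is problematic in the topological setting (cf.\ Example 17 of \cite{cnp:bdd orbits}), which is precisely why the paper proceeds instead via minimal convex invariant sets and the weak-to-strong continuity upgrade.
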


Let us fix a point of terminology: compact means here quasi-compact and Hausdorff; locally compact means that each point has a compact neighborhood. A locally compact space is not necessarily Hausdorff. 
An easy generalisation of this theorem is given in \cite{ren:approach} in the language of groupoids and cocycles. For example, given $(T,X)$ as above, one can consider the topological groupoid
$$G(X,T)=\{(x,m-n,y): x,y\in X,\quad m,n\in\N\quad T^mx=T^ny\}$$
with range and source maps $r,s: G(X,T)\ra X$ given respectively by $r(x,k,y)=x$ and $s(x,k,y)=y$, multiplication $(x,k,y)(y,l,z)=(x,k+l,z)$, inverse map $(x,k,y)^{-1}=(y,-k,x)$ and basic open sets of the form
$${\mathcal U}(U,V,m,n)=\{(x,m-n,y): (x,y)\in U\times V\quad T^mx=T^ny\}.$$
A function $f:X\ra A$, where $A$ is an abelian group defines $c_f: G(X,T)\ra A$ according to
$$c_f(x, m-n, y)=f(x)+f(Tx)+\ldots+f(T^{m-1}x)-f(T^{n-1}y)-\ldots -f(Ty)-f(y).$$
This is a cocycle (with respect to the trivial action of $G$ on $A$): $c_f(\gamma\gamma')=c_f(\gamma)+c_f(\gamma')$ for all composable pairs $(\gamma,\gamma')$.
This cocycle is a coboundary if and only $f$ is a coboundary, i.e. of the form $f=g-g\circ T$: then $c_f=g\circ r-g\circ s$. Let us assume that $A$ is a topological group. The cocycle is continuous if and only if $f$ is continuous; it is said to be a continuous coboundary if the function $g$ can be chosen continuous. Let us recall that a topological groupoid $G$ with unit space $X$ is said to be minimal if $\emptyset$ and $X$ are the only open invariant subsets of $X$.  Here is a groupoid version of the Gottschalk-Hedlund theorem adapted from Theorem 1.4.10 of \cite{ren:approach}.

\begin{thm} \label{LNM 793} Let $G$ be a minimal topological groupoid with compact  unit space $X$ and let $A$ be a topological abelian group without non-trivial compact subgroup. For a continuous cocycle $c:G\ra A$, the following properties are equivalent:
\begin{enumerate}
\item there exists a continuous function $g:X\ra A$ such that $c=g\circ r-g\circ s$,  
\item there exists $x\in X$ such that $c(G_x)$ (where $G_x=s^{-1}(x)$) is relatively compact,
\item  $c(G)$ is relatively compact.
\end{enumerate}
\end{thm}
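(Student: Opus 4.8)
The strategy is to establish $(i)\Rightarrow(iii)\Rightarrow(ii)\Rightarrow(i)$, the substance lying entirely in the last implication. For $(i)\Rightarrow(iii)$: if $c=g\circ r-g\circ s$ with $g$ continuous, then $c(G)$ is contained in $g(X)-g(X)$, the image of the compact set $g(X)\times g(X)\subseteq A\times A$ under the subtraction map of $A$, hence relatively compact. The implication $(iii)\Rightarrow(ii)$ is immediate since $G_x\subseteq G$.

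For $(ii)\Rightarrow(i)$, fix $x$ with $c(G_x)$ relatively compact and put $K=\overline{c(G_x)}$, a compact subset of $A$. Let $G$ act on $X\times A$ by $\gamma\cdot(s(\gamma),a)=(r(\gamma),a+c(\gamma))$; the cocycle identity makes this a genuine action (and shows $c$ vanishes on units). The $G$-orbit of $(x,0)$ is $W=\{(r(\gamma),c(\gamma)):\gamma\in G_x\}$, which lies in the compact Hausdorff space $X\times K$; let $Y$ be its closure there. Since the range and source maps of $G$ are open — as they are for the groupoids under consideration — the closure of a $G$-invariant set is again $G$-invariant, so $Y$ is a nonempty compact $G$-invariant subset of $X\times A$. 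By Zorn's lemma (an intersection of a chain of nonempty closed $G$-invariant subsets of the compact $Y$ is of the same kind) $Y$ contains a minimal nonempty closed $G$-invariant subset $M$. As the projection $\pi\colon X\times A\ra X$ is $G$-equivariant, $\pi(M)$ is a nonempty closed $G$-invariant subset of $X$, so minimality of $G$ forces $\pi(M)=X$.

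It remains to prove that $\pi$ is injective on $M$, i.e. that $M$ is a graph. Suppose $(z,a)$ and $(z,a')$ both lie in $M$ and set $\Delta=a'-a$. For each $t\in A$ the vertical translation $\tau_t\colon(y,b)\mapsto(y,b+t)$ commutes with the $G$-action, hence is an equivariant homeomorphism of $X\times A$; thus $\tau_\Delta(M)$ is closed and $G$-invariant, and it meets $M$ at $(z,a')$, so minimality of $M$ yields $M\subseteq\tau_\Delta(M)$, and the same argument with $-\Delta$ gives $\tau_\Delta(M)\subseteq M$. Hence $\tau_\Delta(M)=M$, and iterating, $\tau_{n\Delta}(M)=M$ for all $n\in\Z$, so $\{a+n\Delta:n\in\Z\}$ is contained in the compact fibre $M\cap(\{z\}\times A)$. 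Therefore $\overline{\{n\Delta:n\in\Z\}}$ is a compact subgroup of $A$, hence trivial, so $\Delta=0$. Thus $\pi|_M$ is a continuous bijection of the compact space $M$ onto the Hausdorff space $X$, hence a homeomorphism, and $g=\mathrm{pr}_A\circ(\pi|_M)^{-1}\colon X\ra A$ is continuous with $M=\{(y,g(y)):y\in X\}$. Finally, applying $G$-invariance of $M$ to the points $(s(\gamma),g(s(\gamma)))$ gives $(r(\gamma),g(s(\gamma))+c(\gamma))\in M$, that is $g(r(\gamma))=g(s(\gamma))+c(\gamma)$ for all $\gamma\in G$, which is $(i)$.

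The decisive step is the injectivity of $\pi|_M$, and it is precisely there that the absence of nontrivial compact subgroups in $A$ enters; in the classical Gottschalk--Hedlund situation $A=\C$ one instead uses that $\{n\Delta\}$ is unbounded when $\Delta\neq0$. The other technical points are the $G$-invariance of the orbit closure $Y$, which relies on the openness of $r$ and $s$, and the upgrade of $\pi|_M$ from bijection to homeomorphism, which uses compactness of $X$ and the Hausdorff hypothesis. This abelian statement is the template for the main theorem of the paper: there a single Hilbert space (more precisely, the fibres of a $G$-Hilbert bundle) replaces $A$, and a barycentre/convexity argument will play the role that ``no nontrivial compact subgroup'' plays here.
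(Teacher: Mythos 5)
Your proof is correct and follows essentially the route the paper intends: the paper itself gives no proof of this theorem but defers to Theorem 1.4.10 of Renault's Lecture Notes, whose argument is exactly this skew-product Gottschalk--Hedlund scheme (orbit closure in $X\times \overline{c(G_x)}$, Zorn to get a minimal nonempty closed invariant set $M$, minimality of $G$ forcing $\pi(M)=X$, and the vertical translations $\tau_{n\Delta}$ together with the absence of nontrivial compact subgroups forcing $M$ to be a graph). Your use of the openness of $s$ (net lifting) to show that closures of invariant sets are invariant is precisely the point of the paper's remark that the ``cover of continuous $G$-sets'' hypothesis of the original reference is unnecessary under the standing assumption that $r$ and $s$ are open.
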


\begin{rem}
The extra assumption, namely that $G$ admits a cover of continuous $G$-sets, made in \cite{ren:approach} is in fact not needed. However, our standing assumption for topological groupoids is that the range and source maps are open. The above groupoid $G(X,T)$ satisfies this assumption only if $T$ is an open map.
\end{rem}

The above setting is unsatisfactory: the natural data for continuous groupoid cohomology (see for example \cite{tu:cohomology}) consist of:
\begin{itemize}
\item a topological groupoid $G$ over a topological space $X$,
\item  a space of coefficients (or $G$-module) $A$, which is a continuous bundle of topological abelian groups $A_x$ over $X$ endowed with a continuous $G$-action, i.e. $G$ acts by isomorphisms $L(\gamma): A_{s(\gamma))}\ra A_{r(\gamma)}$ and the action map $G*A\ra A$ is continuous.
\end{itemize}
The above theorem only covers the case when $A$ is a constant bundle endowed with a trivial action. We refer the reader to \cite{mrw:Morita} for the definition of a groupoid action. 

In this framework, a (one-)cocycle is a section $c: G\ra r^*A$ (i.e. $c(\gamma)\in A_{r(\gamma)}$) such that $c(\gamma\gamma')=c(\gamma)+L(\gamma)c(\gamma')$. It is a coboundary if there exists a section $f: X\ra A$ such that $c(\gamma)=f\circ r(\gamma)-L(\gamma)f\circ s(\gamma)$. A cocycle $c: G\ra r^*A$ defines an affine action of $G$ on $A$, given by
$$\gamma a=L(\gamma)a+c(\gamma).$$
Let us denote by $A(c)$ this $G$-affine space. More generally, a $G$-affine space over $A$ is a space $Z$ endowed with a left action of $G$ and a right principal action of $A$ (written additively) such that
\begin{itemize}
\item $r: Z\ra G^{(0)}$ identifies $Z/A$ with $G^{(0)}$,
\item $\gamma(z+a)=\gamma z+L(\gamma)a$.
\end{itemize}
The choice of a section $f$ for $r: Z\ra G^{(0)}$ gives a cocycle
$$c(\gamma)=\gamma f\circ s(\gamma)-f\circ r(\gamma)$$
which identifies $Z$ with $A(c)$: $f$ becomes the zero section. Up to a coboundary, the cocycle depends only on the isomorphism class of the $G$-affine space $Z$. A $G$-affine space over $A$ is said to be trivial if it is isomorphic to $A$; this is equivalent to the existence of a $G$-equivariant section $f$.
\vskip 5mm
We are interested in the case when the space of coefficients is a $G$-Hilbert bundle. We shall assume here that our Hilbert spaces are complex Hilbert spaces but our proofs and results hold for real Hilbert spaces as well. The scalar product, which is chosen to be linear in the second variable, is denoted by $(u|v)$ and the associated norm is denoted by $\|u\|$. We shall deal with a family $(E_x)_{x\in X}$ of Hilbert spaces; when there is no ambiguity, we shall omit the index $x$ in $(u|v)_x$ and in $\|u\|_x$. We shall use \cite{dd:dixmier-douady} and \cite{fd:representations} as references to Hilbert bundles. Let us recall the definition of a Hilbert bundle given in \cite[II.13.5]{fd:representations} as a particular case of a Banach bundle:

\begin{defn}\label{Banach bundle} Let $X$ be a topological space. A Banach [resp. Hilbert] bundle over $X$ is a pair $<E,\pi>$ where $E$ is a topological space called the bundle space, and $\pi:E\ra X$ is a continuous open surjection called the bundle projection, together with operations and norms making each fiber $E_x=\pi^{-1}(x)$ into a Banach [resp. Hilbert] space, and satisfying the following conditions:
\begin{enumerate}
\item $u\mapsto \|u\|$ is continuous on $E$ to $\R$.
\item The operation $+$ is continuous as a map on $E*E\defequal \{(u,v)\in E\times E: \pi(u)=\pi(v)\}$ to $E$.
\item For each scalar $\lambda$, the map $u\mapsto \lambda u$ is continuous on $E$ to $E$.
\item If $x\in X$ and $(u_i)$ is a net of elements of $E$ such that $\|u_i\|\to 0$ and $\pi(u_i)\to x$ in $X$, then $u_i\to 0_x$ in $E$, where $0_x$ is the zero element of $E_x$.
\end{enumerate}
\end{defn}
It is assumed in \cite{fd:representations} that $X$ is Hausdorff. We shall make the weaker assumption that $X$ is locally Hausdorff (every point has a Hausdorff neighborhood) and shall apply results of \cite{fd:representations} to the reduction of $E$ to Hausdorff subspaces of $X$. Elements of $E$ will be denoted by $u$ or by $(x,u)$ where $x=\pi(u)$. An important result (see Appendix C of \cite{fd:representations}) says that when $X$ is paracompact, there are sufficiently many continuous sections, i.e. for all $(x,u)\in E$, there is a continuous section $f:X\ra E$ such that $f(x)=u$. The related notion of continuous field of Banach spaces developed in \cite{dd:dixmier-douady} (see also \cite[10.1]{dix:C*}) privileges continuous sections. Both notions -- Banach bundle and continuous field of Banach spaces -- are equivalent when the base space $X$ is paracompact. One can recover the topology of $E$ from the space $C(X,E)$ of continuous sections as follows:  $g\in C(X,E)$, $V$ open subset of $X$ and $\epsilon>0$ define a ``tube''
$$T(g,V,\epsilon)=\{(x,u)\in E: x\in V,\quad \|u-g(x)\|<\epsilon\}.$$
The family of these tubes form a base for the topology of $E$ (\cite[Theorem II.13.18]{fd:representations}). When $X$ is locally paracompact (every point has a paracompact neighborhood), we consider continuous sections $g:V\ra E$ where $V$ is open and paracompact instead of global continuous sections. For the sake of simplicity, we shall always assume that the base space of the bundle is locally compact.

When $E$ is a Hilbert bundle over a locally compact Hausdorff space $X$, the space ${\mathcal E}=C_0(X,E)$ of continuous sections vanishing at infinity is a Hilbert $C_0(X)$-module, where $C_0(X)$ is the C*-algebra of complex-valued continuous functions vanishing at infinity endowed with the sup-norm: given $h\in C_0(X)$ and $f\in C_0(X,E)$, we define $fh\in C_0(X,E)$ by $(fh)(x)=f(x)h(x)$ and given $f,g\in C_0(X,E)$, we define $<f,g>\in C_0(X)$ by $<f,g>(x)=(f(x)|g(x))$. 

Let us turn to the definition of a $G$-Hilbert bundle. From now on $G$ designates a topological groupoid with unit space $X$. 

\begin{defn} A $G$-Hilbert bundle is a Hilbert bundle $\pi:E\ra X$ together with a continuous action $G*E\ra E$, where as usual, $G*E=\{(\gamma, u)\in G\times E: s(\gamma)=\pi(u)\}$, sending $(\gamma, u)$ to $L(\gamma)u$ and such that for all $\gamma \in G$, $L(\gamma): E_{s(\gamma))}\ra E_{r(\gamma)}$ is a linear isometry.\end{defn}

The theory of induced representations provides a justification for studying $G$-Hilbert bundles. For example, if $E$ is a $H$-Hilbert space, where $H$ is a closed subgroup of a locally compact group $G$, then the quotient $(G\times E)/H$, where $H$ acts by the diagonal action $h(g,e)=(gh^{-1}, L(h)e)$ is an equivariant $G$-Hilbert bundle over $G/H$; equivalently, it is a $G\,\lsd G/H$-Hilbert bundle, where $G\,\lsd G/H$ is the groupoid of the left action of $G$ on $G/H$. A trivialization of this bundle would often require a continuous section of the quotient map $G\ra G/H$, which may not exist.

We can now state our theorem. 

\begin{thm}\label{main} Let $G$ be a minimal locally compact groupoid on a compact metrizable space $X$, let $E$ be a continuous $G$-Hilbert bundle and let $c:G\ra r^*H$ be a continuous cocycle. Assume that $E$ is second countable. Then the following properties are equivalent:
\begin{enumerate}
\item $c$ is a continuous coboundary,
\item there exists $x\in X$ such that $\|c(G_x)\|$ is bounded,
\item $\|c(G)\|$ is bounded.
\end{enumerate}
\end{thm}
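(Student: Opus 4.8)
The plan is to prove the cycle of implications $(1)\Rightarrow(3)\Rightarrow(2)\Rightarrow(1)$, where the first two are easy and the real content is $(2)\Rightarrow(1)$. For $(1)\Rightarrow(3)$: if $c(\gamma)=f\circ r(\gamma)-L(\gamma)f\circ s(\gamma)$ with $f$ a continuous section, then since $X$ is compact and $u\mapsto\|u\|$ is continuous on $E$, the set $\{\|f(x)\|:x\in X\}$ is bounded by some $M$; the isometry property of $L(\gamma)$ gives $\|c(\gamma)\|\le\|f(r(\gamma))\|+\|f(s(\gamma))\|\le 2M$. The implication $(3)\Rightarrow(2)$ is trivial. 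So everything reduces to showing that a bounded continuous cocycle, assumed bounded only along one source fiber, is a continuous coboundary.

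For $(2)\Rightarrow(1)$ I would follow the strategy of Coronel--Navas--Ponce, interpreting the cocycle via the associated $G$-affine space $A(c)$ as described in the excerpt: a continuous coboundary corresponds exactly to a continuous $G$-equivariant section of $r:A(c)\ra X$, i.e.\ to a fixed point of the affine $G$-action. The geometric input is that a bounded orbit in a Hilbert space (more precisely, in the affine fiber bundle) has a canonical \emph{center of the smallest enclosing ball} (Chebyshev center), which is a non-expansive, hence continuous, choice and is automatically equivariant because each $L(\gamma)$ is an isometry. Concretely: fix $x_0$ with $\|c(G_{x_0})\|\le M$. For $\gamma\in G_{x_0}$ one has $\gamma\cdot 0_{x_0}=c(\gamma)\in E_{r(\gamma)}$, so the orbit of the zero vector of the fiber at $x_0$ under the affine action is a bounded subset of $E$ meeting each fiber $E_y$ with $y$ in the orbit of $x_0$. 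By minimality this orbit is dense in $X$. On each such fiber $E_y$ the orbit points form a bounded set; let $b(y)$ be the center of its closed convex hull's smallest enclosing ball. Equivariance of this construction under the isometries $L(\gamma)$ shows that $\gamma\cdot b(s(\gamma))=b(r(\gamma))$ whenever both are defined, i.e.\ $b$ is a $G$-equivariant section defined on the dense orbit, and the cocycle identity then forces $c(\gamma)=b\circ r(\gamma)-L(\gamma)b\circ s(\gamma)$ there. It remains to show $b$ extends to a continuous section on all of $X$, which will satisfy the same identity by continuity of $c$ and of the action.

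The main obstacle — and the reason second countability, metrizability and the continuous-field structure enter — is precisely this last extension/continuity step: passing from a $G$-equivariant section defined (and a priori only ``measurably'' or pointwise) on a dense orbit to a genuinely continuous global section of the Hilbert bundle. In the constant-bundle case of \cite{cnp:bdd orbits} one has the fixed Hilbert space and can use weak compactness of bounded sets directly; here one must control how $b(y)$ varies as $y$ ranges over $X$, using that $E$ is a continuous field with enough continuous sections (Appendix C of \cite{fd:representations}) and that tubes $T(g,V,\epsilon)$ form a base for the topology of $E$. I would argue that $y\mapsto b(y)$ is uniformly continuous for a metric compatible with the bundle topology: the smallest-enclosing-ball center depends $1$-Lipschitzly on the bounded set in Hausdorff distance within a single Hilbert space, and the cocycle $c$ together with continuity of the $G$-action propagates a uniform modulus of continuity from a neighborhood of $x_0$ across $X$ by the density and the compactness of $X$ (a standard ``uniform equicontinuity from minimality + compactness'' argument, as in the scalar Gottschalk--Hedlund proof). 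Once a uniform modulus is in hand, $b$ extends uniquely to a continuous section on $X=\overline{\text{orbit}}$, and one checks the coboundary identity holds everywhere by passing to limits; the hypotheses that $X$ is compact metrizable and $E$ second countable are exactly what make these limiting arguments legitimate.
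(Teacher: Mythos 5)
The easy implications are fine, but your $(2)\Rightarrow(1)$ rests on exactly the step that fails. Taking $b(y)$ to be the Chebyshev centre (``lemma of the centre'') of the orbit trace in each fibre does produce a well-defined, $G$-equivariant, bounded section, and this is precisely the construction discussed in the paper's concluding remarks: it yields a \emph{measurable} coboundary in the measure-theoretic setting (Anantharaman-Delaroche), but its continuity is known to be problematic --- the paper points to Example 17 of Coronel--Navas--Ponce for a counterexample to the continuity of such centre-type selections. Your argument for continuity is only sketched (``I would argue that\dots''), and both of its ingredients are unsound: the circumcentre of a bounded set in a Hilbert space is \emph{not} $1$-Lipschitz in the Hausdorff metric (at best H\"older of exponent $1/2$), and, more fatally, there is no reason the fibres $y\mapsto \overline{\mathrm{orbit}}\cap E_y$ vary continuously in Hausdorff distance --- orbit-closure fibres are in general only semicontinuous, so even a continuous dependence of the centre on the set would not give continuity of $b$. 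Likewise the proposed ``uniform equicontinuity from minimality and compactness'' does not propagate anything unless you already have a modulus of continuity at \emph{some} point, and your construction provides none; minimality only spreads continuity from an existing point of continuity (this is what the paper's module-of-continuity argument does), it does not create one.

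The paper's proof is structured precisely to avoid this obstruction, and your proposal is missing both of its key ideas. First, instead of the centre, one takes a \emph{minimal} weakly compact, fiberwise convex, invariant subset $M$ of the bundle (Zorn's lemma applied to the weakly closed convex hull of the bounded orbit, after developing a weak topology on the bundle space and showing bounded sets are relatively weakly compact and that convex hulls of invariant sets are invariant); uniform convexity of the fibres forces each $M_x$ to be a single point, giving a $G$-equivariant section that is \emph{weakly} continuous because $M$ is weakly compact. Second, the passage from weak to norm continuity is not a uniform-modulus argument: one writes the weakly continuous section as a pointwise limit of continuous sections using Blecher's approximation of the countably generated Hilbert $C(X)$-module $C(X,E)$ (this is where second countability of $E$ and metrizability of $X$ enter), applies Baire to get a dense $G_\delta$ of continuity points, and then uses the invariance of the sublevel sets of the module of continuity together with minimality to conclude continuity everywhere. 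Without some replacement for these two steps --- in particular, without any mechanism producing a single point of continuity of your section $b$ --- the proposal does not prove $(2)\Rightarrow(1)$.
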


These properties have a nice interpretation in terms of the $G$-affine space $E(c)$. As we have seen earlier, condition $(i)$ says that $E(c)$ is trivial or, equivalently, admits a $G$-equivariant continuous section. Condition $(ii)$ says that there exists a bounded $G$-orbit in $E(c)$. Condition $(iii)$ says that all $G$-orbits in $E(c)$ are bounded. The implications $(i)\Rightarrow (iii) \Rightarrow (ii)$ are obvious.

When $G$ is a group, this theorem is a well-known result (see for example \cite[Proposition 2.2.9]{bhv:T}). In fact, it is valid for a much larger class of Banach spaces than Hilbert spaces (we still assume that the action is isometric!). U. Bader, T. Gelander and N. Monod have recently shown in \cite{bgm:fixed} that it is true for Banach spaces which are $L$-embedded.
 When $G=G(X,T)$ as above and $E=X\times\C$ with the trivial $G$-action, this is \thmref{Gottschalk-Hedlund}.
The situation studied by Coronel, Navas and Ponce is essentially the case when $E=X\times F$, where $F$ is a fixed Hilbert space, is a constant Hilbert bundle (but on which $G$ acts non-trivially). More precisely, they consider a skew action of a semigroup $\Gamma$ on $X\times F$ where $g\in\Gamma$ acts continuously on $X\times F$ according to
$g(x,v)=(g(x), I(g,x)v)$, where $I(g,x)$ is an isometry of $F$ and satisfies $$I(gh, x)=I(g, h(x))I(h,x).$$
Under additional assumptions on the dynamical system $(\Gamma,X)$, this can be put into the groupoid setting along the lines of \cite{er:semigroups}. 

We shall prove that $(ii)\Rightarrow (i)$, namely the existence of a bounded $G$-orbit in $E(c)$ implies the existence of a $G$-equivariant continuous section. Our proof  is modelled after \cite[Section 4]{cnp:bdd orbits} and consists of two steps. First, we show that $E(c)$ admits a $G$-equivariant weakly continuous section. Secondly, we show that a $G$-equivariant weakly continuous section is automatically continuous.

\section{Existence of a $G$-equivariant weakly continuous section.}

The main task is to define the weak topology on the bundle space $E$ of a continuous Hilbert bundle over a topological space $X$. It is easy to do when $E=X\times F$ is a constant bundle: then we just consider the product topology $X\times F_\sigma$, where  $F_\sigma$ is the Hilbert space $F$ endowed with the weak topology.

\begin{prop} Let $E$ be a Hilbert bundle over a locally compact space $X$ and let $(\underline x,\underline u)\in E$. Then the sets
$$U(V;f_1,\ldots,f_n;\epsilon)=\{(x,u)\in E: x\in V,\, \forall i=1,\ldots, n, |(f_i(x)|u)-(f_i(\underline x)|\underline u)|<\epsilon\},$$
where $V$ is a compact neighborhood of $\underline x$, for all $i=1,\ldots, n\,$,$\,f_i:V\ra E$ is a continuous section and $\epsilon>0$,
form a fundamental system of neighborhoods of $(\underline x,\underline u)$ for a topology of $E$. 
\end{prop}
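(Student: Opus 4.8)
The plan is to verify the two standard axioms for a family of sets to be a fundamental system of neighborhoods defining a topology: namely, that (a) the point $(\underline x,\underline u)$ belongs to each set $U(V;f_1,\ldots,f_n;\epsilon)$, and (b) the intersection of two such sets contains a third such set, and moreover that the family is filtering enough that we can cover the transitivity requirement. Axiom (a) is immediate since $|(f_i(\underline x)|\underline u)-(f_i(\underline x)|\underline u)|=0<\epsilon$ for every choice of data. For the filtering property, given $U(V;f_1,\ldots,f_n;\epsilon)$ and $U(W;g_1,\ldots,g_m;\delta)$, I would take $V\cap W$ (still a compact neighborhood of $\underline x$ since $X$ is locally compact, after possibly shrinking to a compact neighborhood contained in the interior of both), restrict all the sections $f_i$ and $g_j$ to $V\cap W$, and use the parameter $\min(\epsilon,\delta)$: the resulting set is contained in both. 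This uses that a finite list of continuous sections on a common domain is again a legitimate choice of data.

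The one genuine point requiring the hypotheses — and the step I expect to be the main obstacle — is the local transitivity axiom for a neighborhood base: for each $U = U(V;f_1,\ldots,f_n;\epsilon)$ and each point $(y,w)\in U$, I must produce a neighborhood $U'$ of $(y,w)$ of the same form with $U'\subseteq U$. The difficulty is that the sections $f_i$ were chosen to pass near $\underline u$ over $\underline x$, not near $w$ over $y$; I need to re-center. Here I would invoke the result quoted in the excerpt (Appendix C of \cite{fd:representations}) that over a paracompact — in particular, locally compact — base there are enough continuous sections: choose a compact neighborhood $V'\subseteq V$ of $y$, and on $V'$ I may use the very same sections $f_1,\ldots,f_n$ but with reference point $(y,w)$. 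Concretely, set $\eta = \epsilon - \max_i |(f_i(y)|w)-(f_i(\underline x)|\underline u)|$, which is strictly positive precisely because $(y,w)\in U$; then $U' = U(V';f_1,\ldots,f_n;\eta)$ works by the triangle inequality: if $(x,u)\in U'$ then $|(f_i(x)|u)-(f_i(\underline x)|\underline u)|\le |(f_i(x)|u)-(f_i(y)|w)| + |(f_i(y)|w)-(f_i(\underline x)|\underline u)| < \eta + (\epsilon-\eta)=\epsilon$. Note this argument does not even need new sections, only restriction and a shifted tolerance, so in fact the existence-of-sections result is needed only implicitly (to guarantee the family of admissible data is nonempty and rich).

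Finally I would remark that the continuity of $u\mapsto\|u\|$ and of addition — part of the Banach bundle axioms — are not needed for the mere existence of the topology, but they (together with the continuity of the sections $f_i$ and the inner product being jointly continuous on the constant-bundle pieces, via restriction to Hausdorff opens) ensure that this ``weak topology'' is coarser than the norm topology of $E$ and agrees with the expected product topology $V\times F_\sigma$ on trivial pieces; this justifies calling it the weak topology. With axioms (a), the filtering property, and local transitivity established, the cited characterization of neighborhood bases yields a unique topology on $E$ for which these sets $U(V;f_1,\ldots,f_n;\epsilon)$ form a fundamental system of neighborhoods of $(\underline x,\underline u)$, completing the proof.
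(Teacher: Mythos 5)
Your overall route is the same as the paper's: verify Bourbaki's neighborhood-filter axioms $(V_I)$--$(V_{IV})$ of \cite[Section 1.2]{bbki:topologie} for the family of subsets containing some $U(V;f_1,\ldots,f_n;\epsilon)$; the paper leaves this check to the reader, and your verification of point-membership and of the filtering property (shrinking $V\cap W$ to a compact neighborhood and concatenating the sections) is fine. The gap is in the transitivity step, where you replace the axiom actually needed by a strictly stronger claim that is false: you assert that for \emph{every} point $(y,w)\in U=U(V;f_1,\ldots,f_n;\epsilon)$ there is a basic set centered at $(y,w)$ contained in $U$. If $y\in V\setminus\mathrm{int}(V)$ --- and such points do occur in $U$ --- there is no compact neighborhood $V'$ of $y$ contained in $V$, so your prescription ``restrict the same $f_i$ to $V'\subseteq V$'' cannot be carried out. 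Moreover the failure is not merely one of your particular construction: membership in $U$ forces $x\in V$, while any basic set at $(y,w)$ has for its base a neighborhood $V'$ of $y$, which meets $X\setminus V$ arbitrarily close to $y$; choosing a continuous section through $(y,w)$ over a compact (hence Hausdorff, by the paper's convention) neighborhood, one sees that the finitely many inner-product constraints remain satisfiable at points of $V'\setminus V$ near $y$, so no basic set at such $(y,w)$ lies inside $U$. In other words, $U$ is genuinely not a neighborhood of its points lying over the boundary of $V$.

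The repair is immediate and stays within your triangle-inequality idea, because Bourbaki's $(V_{IV})$ only asks for \emph{some} $W$ in the filter at $(\underline x,\underline u)$ such that $U$ is a neighborhood of every point of $W$: take a compact neighborhood $V_0$ of $\underline x$ with $V_0\subseteq\mathrm{int}(V)$ (possible since $V$ itself is compact Hausdorff, so compact neighborhoods form a local base at $\underline x$) and set $W=U(V_0;f_1,\ldots,f_n;\epsilon)$. For $(y,w)\in W$ one has $y\in\mathrm{int}(V)$, so a compact neighborhood $V'\subseteq V$ of $y$ exists, and your argument with $\eta=\epsilon-\max_i|(f_i(y)|w)-(f_i(\underline x)|\underline u)|>0$ applies verbatim. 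Two minor points: ``paracompact --- in particular, locally compact'' is backwards (local compactness does not imply paracompactness), though this is harmless here since your sections live over compact sets; and, as you correctly note, the existence-of-sections theorem of \cite[Appendix C]{fd:representations} is not needed for the proposition itself.
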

This topology is called the weak topology of $E$. When $E$ is endowed with the weak topology, it is denoted by $E_\sigma$. The original Hilbert bundle topology is called the strong topology. We let the reader check that the strong topology is finer than the weak topology.
\begin{proof} One checks that the family ${\mathcal V}(\underline x,\underline u)$ of subsets of $E$ containing some $U(V;f_1,\ldots,f_n;\epsilon)$ satisfies the axioms $(V_I), (V_{II}), (V_{III})$ and $(V_{IV})$ of \cite[Section 1.2]{bbki:topologie}.
\end{proof}
One can also check that $E_\sigma=X\times F_\sigma$ when $E=X\times F$, where $F$ is a fixed Hilbert space.

\begin{prop} Let $E$ be a Hilbert bundle over a compact space $X$. Let ${\mathcal E}=C(X,E)$ be the Banach space of continuous sections of $E$, equipped with the sup-norm. Then, the map $(id_X,j): E_\sigma\ra X\times {\mathcal E}^*_\sigma$, where ${\mathcal E}^*_\sigma$ is the dual Banach space ${\mathcal E}^*$ endowed with the $*$-weak topology and where $j: E\ra {\mathcal E}^*$ is the evaluation map $<j(x,u),f>=(u|f(x))$ for $(x,u)\in E$ and $f\in C(X,E)$, is a homeomorphism onto its image.
\end{prop}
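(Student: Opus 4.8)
The plan is to verify, in turn, the three properties that make $(id_X,j)$ a homeomorphism onto its image: injectivity, continuity as a map $E_\sigma\to X\times\mathcal E^*_\sigma$, and openness onto the image. The only step that uses the hypotheses rather than merely unwinding definitions is a preliminary reduction: \emph{because $X$ is compact}, the weak topology of $E$ has a base consisting of the sets $U(V;f_1,\dots,f_n;\epsilon)$ in which $V$ is open in $X$ and the sections $f_1,\dots,f_n$ are \emph{global}, i.e. belong to $\mathcal E=C(X,E)$. Indeed, the basic neighbourhood of a point $(\underline x,\underline u)$ furnished by the previous proposition uses local sections $f_i\colon V\to E$ on a compact neighbourhood $V$ of $\underline x$; shrinking $V$ to a smaller compact neighbourhood $V'$, extending each $f_i|_{V'}$ to a global section $g_i\in\mathcal E$ by the section extension theorem for Banach bundles over a paracompact base (Appendix~C of \cite{fd:representations}), and letting $W$ be the interior of $V'$, one has the inclusion $U(W;g_1,\dots,g_n;\epsilon)\subseteq U(V;f_1,\dots,f_n;\epsilon)$ between neighbourhoods of $(\underline x,\underline u)$, since the $g_i$ agree with the $f_i$ on $W$.

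Injectivity is then immediate from the abundance of continuous sections: if $(id_X,j)(x,u)=(id_X,j)(x',u')$ then $x=x'$ and $(u|f(x))=(u'|f(x))$ for all $f\in\mathcal E$, and since every vector of $E_x$ is of the form $f(x)$ with $f\in\mathcal E$, this forces $u=u'$.

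Continuity and openness are obtained by matching basic neighbourhoods through the defining relation $\langle j(y,w),f\rangle=(w|f(y))$. A basic neighbourhood of $(x,j(x,u))$ in $X\times\mathcal E^*_\sigma$ has the form $W=V\times\{\phi\in\mathcal E^*:|\langle\phi,f_i\rangle-(u|f_i(x))|<\epsilon,\ 1\le i\le n\}$ with $V$ open in $X$, $f_i\in\mathcal E$ and $\epsilon>0$; its preimage under $(id_X,j)$ is $\{(y,w)\in E:y\in V,\ |(w|f_i(y))-(u|f_i(x))|<\epsilon,\ 1\le i\le n\}$, a union of basic weak neighbourhoods, hence weakly open — so $(id_X,j)$ is continuous. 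Running the same computation in reverse, and using the preliminary reduction so that the sections may be taken global, one gets for every basic weakly open set $U(V;f_1,\dots,f_n;\epsilon)$ of $E$ the equality $(id_X,j)\big(U(V;f_1,\dots,f_n;\epsilon)\big)=W\cap(id_X,j)(E)$; thus $(id_X,j)$ maps basic weakly open sets to relatively open subsets of its image, and, being injective with these sets forming a base, it is open onto its image.

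I expect the only delicate point to be the preliminary reduction — replacing the local continuous sections of the definition of $E_\sigma$ by global sections in $\mathcal E$ — which is exactly where compactness of $X$ is used, through the existence of enough continuous sections and through section extension; once that is settled, everything else is a routine comparison of the weak topology of $E$ with the $*$-weak topology of $\mathcal E^*$.
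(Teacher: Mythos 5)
Your proof is correct and follows essentially the same route as the paper: injectivity from the existence of enough continuous sections, and a direct comparison, via the pairing $\langle j(x,u),f\rangle=(u|f(x))$, of the weak topology of $E$ with the topology induced from $X\times{\mathcal E}^*_\sigma$. Your explicit reduction from local to global sections (using the section--extension theorem over the compact base) is exactly the point that the paper's terser net argument leaves implicit in its ``by definition of the weak topology'' steps, so the two arguments coincide in substance.
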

 
 \begin{proof} The map $(id_X,j): E\ra X\times {\mathcal E}^*$ is injective: consider $(x,u)$ and $(x',u')$ in $E$. If $x\not=x'$, they have distinct images. If $x=x'$ and $u\not=u'$, there exists $v\in E_x$ such that $(u|v)\not=(u'|v)$. There exists $f\in C(X,E)$ such that $f(x)=v$. Then $<j(x,u),f>\not=<j(x,u'),f>$ and $j(x,u)\not=j(x,u')$. The map is continuous with respect to the weak topologies: if the net $(x_i,u_i)$ converges to $(x,u)$ in $E_\sigma$, then $x_i$ converges to $x$ in $X$; for $f\in C(X,E)$, $<j(x_i,u_i),f>=(u_i|f(x_i))$ converges to $(u|f(x))=<j(x,u),f>$ by definition of the weak topology of $E$. Conversely, if $x_i$ converges to $x$ in $X$ and $j(x_i,u_i)$ converges to $j(x,u)$ in ${\mathcal E}^*_\sigma$, then by definition, $(x_i,u_i)$ converges to $(x,u)$ in $E_\sigma$.
 \end{proof}

\begin{defn} We say that a subset $A$ of the bundle space $E$ of a Hilbert bundle is bounded if the norm function is bounded on $A$.
\end{defn}

\begin{lem}\label{joint continuity} Let $E$ be a Hilbert bundle over a locally compact space $X$. Assume that the net $(x_i,v_i)$ (based on some directed set $J$) is bounded and converges to $(x,v)$ in $E_\sigma$ and that the net $(x_i,e_i)$ (based on the same $J$) converges to $(x,e)$ in $E$. Then, the net $(v_i|e_i)$ converges to $(v|e)$.
\end{lem}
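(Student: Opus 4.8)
The plan is to split the difference $(v_i\mid e_i)-(v\mid e)$ into two pieces,
$$(v_i\mid e_i)-(v\mid e) = (v_i\mid e_i - e_i') + (v_i\mid e_i')-(v\mid e),$$
where $(e_i')$ is an auxiliary net built from a single \emph{fixed} continuous section. More precisely, I would first reduce to a neighborhood of $\underline x=x$ on which $E$ is Hausdorff (so the results of \cite{fd:representations} apply) and pick a continuous section $g:V\ra E$ with $g(x)=e$. Since $(x_i,e_i)\to (x,e)$ in the strong topology, the tube description of that topology (Theorem II.13.18 of \cite{fd:representations}) gives $\|e_i-g(x_i)\|\to 0$. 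Now write
$$(v_i\mid e_i)-(v\mid e) = \bigl(v_i\;\big|\;e_i-g(x_i)\bigr) + \bigl(v_i\mid g(x_i)\bigr) - (v\mid e).$$
The first term is controlled by Cauchy–Schwarz: $|(v_i\mid e_i-g(x_i))|\le \|v_i\|\,\|e_i-g(x_i)\|$, and this tends to $0$ because $\sup_i\|v_i\|<\infty$ by the boundedness hypothesis while $\|e_i-g(x_i)\|\to 0$. The second term, $(v_i\mid g(x_i))$, is exactly of the form tested by the weak topology on $E$, since $g$ is a continuous section; hence $(v_i\mid g(x_i)) = \overline{(g(x_i)\mid v_i)}\to \overline{(g(x)\mid v)} = (v\mid g(x)) = (v\mid e)$ by the definition of $E_\sigma$ (here using $g(x)=e$). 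Adding the two estimates gives $(v_i\mid e_i)\to (v\mid e)$.

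The only genuine subtlety is making sure the tools are available: the tube characterization of the strong topology and the existence of a continuous section through a prescribed point require the base to be (locally) paracompact and Hausdorff, which is why I localize first — $X$ is locally compact, hence locally paracompact, and we may shrink to a Hausdorff compact neighborhood $V$ of $x$; the nets are eventually in $\pi^{-1}(V)$ since $x_i\to x$. Everything else is the two-term split plus Cauchy–Schwarz, with the uniform bound on $\|v_i\|$ doing the essential work; I expect this boundedness input — the reason one cannot drop that hypothesis — to be the "main point" rather than an obstacle.
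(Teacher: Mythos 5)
Your proposal is correct and is essentially the paper's own argument: the same decomposition via a continuous section $g$ through $(x,e)$, Cauchy--Schwarz with the uniform bound on $\|v_i\|$ for the term $(v_i\mid e_i-g(x_i))$, and the definition of the weak topology for $(v_i\mid g(x_i))\to(v\mid e)$. The only (harmless) difference is cosmetic: the paper invokes continuity of addition and of the norm in $E$ to get $\|e_i-g(x_i)\|\to 0$, where you cite the tube description of the strong topology, and your localization to a Hausdorff compact neighborhood is exactly the paper's choice of a compact neighborhood $V$ of $x$.
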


\begin{proof} Choose a compact neighborhood $V$ of $x$. Choose a continuous section $f:V\ra E$ such that $f(x)=e$. Assuming that $\|v_i\|\le a$, we have
$$\begin{array}{cc}
(v_i|e_i)-(v|e)&=(v_i|e_i-f(x_i)) +(v_i|f(x_i)-(v|f(x))\\
|(v_i|e_i)-(v|e)|&\le a\|e_i-f(x_i)\| +|(v_i|f(x_i)-(v|f(x))|.\\
\end{array}$$ 
By continuity of the addition in $E$, $\|f(x_i)-e_i\|$ tends to 0. By definition of the weak convergence, $|(v_i|f(x_i)-(v|f(x))|$ tends also to 0.
\end{proof}

Note that this lemma gives another definition of the weak convergence of a bounded net $(x_i,v_i)$. The next lemma generalizes a well-known characterization of strongly convergent nets in Hilbert spaces.

\begin{lem}\label{weak/strong} Let $E$ be a Hilbert bundle over a locally compact space $X$. Let $(x_i,u_i)$ be a net in $E$ and let $(x,u)$ be an element of $E$. Then \tfae
\begin{enumerate}
\item $(x_i,u_i)\to (x,u)$ strongly;
\item $(x_i,u_i)\to (x,u)$ weakly and $\|u_i\|\to \|u\|$; 
\end{enumerate}
\end{lem}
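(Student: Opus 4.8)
The plan is to reduce the statement to the classical Hilbert-space fact — weak convergence together with convergence of norms forces strong convergence — via the polarization identity, transported to the bundle by means of a continuous section through the limit point.

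The implication $(i)\Rightarrow(ii)$ I would dispatch immediately: the strong topology is finer than the weak one, so strong convergence of $(x_i,u_i)$ to $(x,u)$ implies weak convergence, and $u\mapsto\|u\|$ is continuous on $E$ by axiom $(1)$ of a Hilbert bundle, so $\|u_i\|\to\|u\|$.

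For $(ii)\Rightarrow(i)$, suppose $(x_i,u_i)\to(x,u)$ weakly with $\|u_i\|\to\|u\|$. Since the norms converge, after replacing the net by a tail I may assume $(u_i)$ is bounded, which is exactly what lets me invoke \lemref{joint continuity}. I would fix a compact neighborhood $V$ of $x$ and, using that there are enough continuous sections, a continuous section $f:V\to E$ with $f(x)=u$; then $(x_i,f(x_i))\to(x,u)$ strongly by continuity of $f$. Applying \lemref{joint continuity} to the bounded weakly convergent net $(x_i,u_i)$ against the strongly convergent net $(x_i,f(x_i))$ yields $(u_i|f(x_i))\to(u|u)=\|u\|^2$. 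Since $\pi(u_i)=\pi(f(x_i))=x_i$, the expansion
$$\|u_i-f(x_i)\|^2=\|u_i\|^2-2\,\mathrm{Re}\,(u_i|f(x_i))+\|f(x_i)\|^2$$
makes sense fiberwise, and its right-hand side tends to $\|u\|^2-2\|u\|^2+\|u\|^2=0$, using $\|u_i\|\to\|u\|$, the previous limit, and $\|f(x_i)\|\to\|f(x)\|=\|u\|$. Hence $\|u_i-f(x_i)\|\to0$.

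To finish I would feed this into the Banach bundle axioms: $\|u_i-f(x_i)\|\to0$ and $\pi(u_i-f(x_i))=x_i\to x$ give $u_i-f(x_i)\to 0_x$ in $E$ by axiom $(4)$ of \defnref{Banach bundle}; then $\big(u_i-f(x_i),\,f(x_i)\big)\to(0_x,u)$ in $E*E$, so by continuity of addition (axiom $(2)$) $u_i=(u_i-f(x_i))+f(x_i)\to u$ strongly, i.e. $(x_i,u_i)\to(x,u)$ strongly. (Alternatively one checks directly that $(x_i,u_i)$ eventually enters every basic tube $T(g,V,\epsilon)\ni(x,u)$, via $\|u_i-g(x_i)\|\le\|u_i-f(x_i)\|+\|f(x_i)-g(x_i)\|$ together with $\|f(x_i)-g(x_i)\|\to\|u-g(x)\|<\epsilon$.) The only real obstacle is the step $(u_i|f(x_i))\to\|u\|^2$: weak convergence of $(x_i,u_i)$ only controls inner products against a \emph{fixed} continuous section evaluated along $x_i$, whereas here the second argument $f(x_i)$ moves, so one genuinely needs \lemref{joint continuity} and hence the boundedness of $(u_i)$ extracted from $\|u_i\|\to\|u\|$; everything else is the verbatim Hilbert-space computation combined with the structural axioms of a Banach bundle.
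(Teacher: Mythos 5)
Your argument is correct and is essentially the paper's proof: fix a continuous section $f$ over a compact neighborhood of $x$ with $f(x)=u$, expand $\|u_i-f(x_i)\|^2=\|u_i\|^2+\|f(x_i)\|^2-2\,\mathrm{Re}(f(x_i)|u_i)$, show it tends to $0$, and conclude strong convergence (the paper cites \cite[Proposition II.13.12]{fd:representations} where you invoke axioms (2) and (4) of \defnref{Banach bundle}, which amounts to the same thing). The only place your commentary overstates the difficulty is the step $(f(x_i)|u_i)\to(f(x)|u)=\|u\|^2$: since $f$ is a fixed continuous section evaluated along $x_i$, this is exactly what the definition of the weak topology controls, so the detour through \lemref{joint continuity} and the boundedness of $(u_i)$ is unnecessary (though harmless).
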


\begin{proof} The implication $(i)\Rightarrow (ii)$ is clear. Suppose that $(ii)$ holds. Choose a compact neighborhood $V$ of $x$ and choose a continuous section $f:V\ra E$ such that $f(x)=u$. By definition of the weak convergence, $(f(x_i)|u_i)\to (f(x)|u)=\|u\|^2$. Therefore:
$$\|f(x_i)-u_i\|^2=\|f(x_i)\|^2+\|u_i\|^2-2{\rm Re}(f(x_i)|u_i)$$
tends to 0. According to \cite[Proposition 13.12]{fd:representations}, this implies that $(x_i,u_i)$ tends to $(x,u)$ strongly.
\end{proof}

We are interested in continuity properties of sections of a Hilbert bundle $E$. Given an arbitrary section $f:X\ra E$, we can define a map $<f|$ which sends a section $g$ to the scalar function $<f,g>(x)=(f(x)|g(x))$ and a map $\tilde f: E\ra \C$ which sends $(x,u)\in E$ to $(f(x)|u)$. We have:

\begin{prop}\label{weak section} Let $E$ be a Hilbert bundle over a compact metrizable space $X$. Let $f:X\ra E$ be a section. Then \tfae
\begin{enumerate}
\item $f$ is weakly continuous;
\item $<f|$ sends $C(X,E)$ into $C(X)$;
\item $\tilde f: E\ra \C$ is continuous with respect to the strong topology.
\end{enumerate}
\end{prop}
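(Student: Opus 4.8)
The plan is to prove the chain $(i)\Rightarrow(ii)\Rightarrow(iii)\Rightarrow(i)$, of which only $(iii)\Rightarrow(i)$ is non-trivial; the implications $(i)\Rightarrow(ii)$ and $(ii)\Rightarrow(iii)$ are routine. For $(i)\Rightarrow(ii)$, if $f$ is weakly continuous, then for any $g\in C(X,E)$ the function $x\mapsto(f(x)|g(x))=\langle j(x,f(x)),g\rangle$ is continuous because $x\mapsto(x,f(x))$ is weakly continuous and $\langle\,\cdot\,,g\rangle$ is weakly continuous on $\mathcal E^*_\sigma$; more directly, it is the composition of $f$ (into $E_\sigma$) with $\tilde g\colon E_\sigma\to\C$, which is continuous by definition of the weak topology. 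So $(i)\Rightarrow(ii)$ is immediate, and by the same token $(i)\Leftrightarrow(iii)$ will follow once we handle the strong topology. For $(ii)\Rightarrow(iii)$, I would argue that if $<f|$ maps $C(X,E)$ into $C(X)$, then by the uniform boundedness principle (using that $C(X,E)$ is a Banach space, $X$ compact, so each $<f,g>$ is bounded, hence $\sup_x\|f(x)\|<\infty$ by considering $\|f(x)\|^2=\sup\{|(f(x)|g(x))|:\|g\|\le 1,\ g\in C(X,E)\}$ — here I use that $X$ is metrizable hence paracompact so there are enough continuous sections, Appendix C of \cite{fd:representations}), and then that $\tilde f$ is continuous on each tube $T(g,V,\epsilon)$: writing $(f(x)|u)=(f(x)|u-g(x))+(f(x)|g(x))$, the second term is continuous in $x$ by $(ii)$, and the first is estimated by $\|f(x)\|\,\|u-g(x)\|$, so joint continuity follows from the local boundedness of $\|f\|$ together with continuity of $(x,u)\mapsto\|u-g(x)\|$ on the tube. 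Since the tubes form a base for the strong topology of $E$ (\cite[Theorem II.13.18]{fd:representations}), $\tilde f$ is strongly continuous.

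For $(iii)\Rightarrow(i)$, suppose $\tilde f\colon E\to\C$ is strongly continuous; I want to show $(x,f(x))$ depends weakly continuously on $x$, i.e. for every $g\in C(X,E)$ the map $x\mapsto(f(x)|g(x))$ is continuous (this suffices: the weak-neighborhood basis $U(V;f_1,\ldots,f_n;\epsilon)$ is exactly controlled by such functions). Fix $g\in C(X,E)$ and fix $\underline x\in X$; I need that $x\mapsto(f(x)|g(x))$ is continuous at $\underline x$. The key point is to promote pointwise information about $\tilde f$ into information about $f$ itself. Since $\tilde f$ restricted to the fiber $E_x$ is the bounded linear functional $u\mapsto(f(x)|u)$, the number $\|f(x)\|=\sup\{|\tilde f(x,u)|: u\in E_x,\ \|u\|\le 1\}$ is finite for each $x$; I would first show $x\mapsto\|f(x)\|$ is locally bounded, then that it is actually continuous. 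Continuity of $\|f\|$: lower semicontinuity comes from writing $\|f(x)\|\ge|\tilde f(x,u)|/\|u\|$ for any continuous section $u$ and taking a sup over a countable family of sections (available since $E$ is second countable) whose values are dense in each fiber; upper semicontinuity comes from the fact that if $x_i\to x$ and $u_i\in E_{x_i}$ with $\|u_i\|=1$ realizes (nearly) $\|f(x_i)\|$, then after passing to a subnet $(x_i,u_i)$ weakly converges (the unit ball of the bundle is weakly relatively compact over a compact base, via the homeomorphic embedding of the previous proposition into $X\times\mathcal E^*_\sigma$ and Banach–Alaoglu) to some $(x,u)$ with $\|u\|\le 1$, and then $\tilde f(x_i,u_i)\to\tilde f(x,u)$ by \lemref{joint continuity}, giving $\limsup\|f(x_i)\|\le|\tilde f(x,u)|\le\|f(x)\|$.

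With $\|f\|$ continuous in hand, I would finish as follows. To show $x\mapsto(f(x)|g(x))$ is continuous at $\underline x$, take any net $x_i\to\underline x$. By the embedding into $X\times\mathcal E^*_\sigma$ and weak-$*$ compactness of bounded sets (using $\sup\|f\|<\infty$ on a compact neighborhood), every subnet of $(x_i,f(x_i))$ has a further subnet converging weakly to some $(\underline x,v)$ with $\|v\|\le\|f(\underline x)\|$. Along such a subnet, \lemref{joint continuity} applied with $e_i=g(x_i)\to g(\underline x)$ gives $(f(x_i)|g(x_i))\to(v|g(\underline x))$; but also \lemref{joint continuity} with $e_i$ a continuous section through any prescribed value, together with strong continuity of $\tilde f$, identifies the weak limit: $(v|w)=\lim(f(x_i)|w(x_i))=\lim\tilde f(x_i,w(x_i))=\tilde f(\underline x,w(\underline x))=(f(\underline x)|w(\underline x))$ for every continuous section $w$, hence $v=f(\underline x)$ since continuous sections separate the fiber. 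Thus $(f(x_i)|g(x_i))\to(f(\underline x)|g(\underline x))$ along the subnet; as every subnet has such a further subnet with the same limit, the full net converges, establishing $(i)$.

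The main obstacle is the continuity of $\|f\|$ — more precisely the upper semicontinuity, which is where second countability of $E$ and the weak-compactness of bounded subsets of the bundle (via the $*$-weak topology on $\mathcal E^*$) are genuinely used; once $\|f\|$ is known to be continuous and locally bounded, the rest is a subnet argument gluing together \lemref{joint continuity} and the separation of fibers by continuous sections. I should also take care, since $X$ here is assumed compact metrizable (hence second countable and paracompact), to justify the existence of a countable family of continuous sections that is fiberwise dense — this follows from second countability of $E$ together with the tube description of its topology.
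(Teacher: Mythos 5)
Your chain $(i)\Rightarrow(ii)\Rightarrow(iii)$ is fine: $(i)\Rightarrow(ii)$ is immediate from the definition of the weak topology, and your $(ii)\Rightarrow(iii)$ — uniform boundedness of the functionals $g\mapsto (f(x)|g(x))$ on $C(X,E)$ to get $\sup_X\|f\|<\infty$, then the estimate $|(f(x)|u)-(f(\underline x)|\underline u)|\le \|f(x)\|\,\|u-g(x)\|+|(f(x)|g(x))-(f(\underline x)|g(\underline x))|$ on a tube through a section $g$ with $g(\underline x)=\underline u$ — is a legitimate variant of the paper's route (the paper proves $(i)\Rightarrow(iii)$ instead, quoting Lafforgue for the boundedness of $\|f\|$, which is where metrizability enters; your UBP argument is an acceptable substitute). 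The genuine problem is in $(iii)\Rightarrow(i)$. The intermediate claim you single out as the main obstacle — that $x\mapsto\|f(x)\|$ is continuous — is false under $(iii)$: the norm is only weakly lower semicontinuous. For instance take $X=[0,1]$, $E=X\times\ell^2$, $f$ equal on $[1/(n+1),1/n]$ to a tent function times $e_n$ and $f(0)=0$: this $f$ is weakly continuous (hence satisfies $(iii)$), yet $\|f\|$ equals $1$ at points arbitrarily close to $0$ while $\|f(0)\|=0$. Correspondingly, your proof of upper semicontinuity misapplies \lemref{joint continuity}: that lemma needs one of the two nets to converge strongly, whereas your $(x_i,u_i)$ converges only weakly and nothing is known about $(x_i,f(x_i))$; the asserted convergence $\tilde f(x_i,u_i)\to\tilde f(x,u)$ would amount to weak continuity of $\tilde f$ on bounded sets, which by \propref{strong section} would force $f$ to be strongly continuous — false in the example. (Also, second countability of $E$ is not a hypothesis of this proposition, and you do not need it: a supremum of continuous functions is lower semicontinuous without any countability.)

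Fortunately the flawed step is not needed. Your closing subnet argument uses only $\sup_X\|f\|<\infty$ (which you already have from $(ii)$, itself an immediate consequence of $(iii)$) together with the identification of the weak cluster value $v$, and that identification is precisely the statement that $\lim \tilde f(x_i,w(x_i))=\tilde f(\underline x,w(\underline x))$ for every continuous section $w$, i.e. that $<f,w>=\tilde f\circ w$ is continuous. But that is already condition $(ii)$, so the whole compactness/subnet machinery is redundant: the paper's proof of $(iii)\Rightarrow(ii)$ is exactly this one line, $<f,g>=\tilde f\circ g$, followed by $(ii)\Rightarrow(i)$ from the definition of the weak topology (the same reduction to global sections that you also take for granted). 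Delete the norm-continuity claim and replace your $(iii)\Rightarrow(i)$ by this composition remark, and your proof becomes correct — and essentially collapses onto the paper's.
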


\begin{proof} The equivalence of $(i)$ and $(ii)$ results directly from the definition of the weak topology. Let us show that $(i)\Rightarrow (iii)$. Let $f:X\ra E$ be weakly continuous. According to \cite[Proposition 1.1.9]{laf:gpd} (this is the only place where the metrizability of $X$ is used), $\|f(x)\|_x$ is bounded on $X$. Suppose that $(x_i,u_i)$ converges  to $(x,u)$ in $E$. According to \lemref{joint continuity}, $(f(x_i)|u_i)$ converges to $(f(x)|u)$. This proves the continuity of $\tilde f$. The implication $(iii)\Rightarrow (ii)$ is clear, because, for $g\in C(X,E)$, $<f,g>=\tilde f\circ g$.
\end{proof}

There is an analogous characterization of (strongly) continuous sections:

\begin{prop}\label{strong section} Let $E$ be a Hilbert bundle over a compact space $X$. Let $f:X\ra E$ be a section. Then \tfae
\begin{enumerate}
\item $f$ is strongly continuous;
\item $<f|$ is an adjointable $C(X)$-linear map from $C(X,E)$ into $C(X)$;
\item $\tilde f: E\ra \C$ is continuous with respect to the weak topology. 
\end{enumerate}
\end{prop}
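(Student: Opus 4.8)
The plan is to prove the equivalence through the four implications $(i)\Rightarrow(iii)$, $(iii)\Rightarrow(i)$, $(i)\Rightarrow(ii)$ and $(ii)\Rightarrow(i)$. Two standing facts will be used: by construction the weak topology on $E$ is the coarsest one making every function $\tilde g$, $g\in C(X,E)$, continuous; and, since $X$ is compact (hence paracompact), $\mathcal E=C(X,E)$ is a Hilbert $C(X)$-module with enough continuous sections, so that $\{g(x):g\in C(X,E)\}=E_x$ for every $x$.

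For $(i)\Rightarrow(iii)$ I would simply note that a strongly continuous $f$ belongs to $C(X,E)$, so $\tilde f$ is one of the functions whose continuity \emph{defines} $E_\sigma$: given $(\underline x,\underline u)\in E$, a compact neighbourhood $V$ of $\underline x$ and $\epsilon>0$, the set $U(V;f;\epsilon)$ is a weak neighbourhood of $(\underline x,\underline u)$ on which $\tilde f$ varies by less than $\epsilon$. The step $(iii)\Rightarrow(i)$ carries the real content. First I would check that weak continuity of $\tilde f$ already forces $f$ to be weakly continuous: for $g\in C(X,E)$ and a net $x_i\to x$ in $X$, the net $(x_i,g(x_i))$ converges to $(x,g(x))$ strongly, hence weakly, so $\tilde f$ sends it to a convergent net, i.e. $(f(x_i)|g(x_i))\to(f(x)|g(x))$; as $g$ is arbitrary, this is exactly weak continuity of $f$. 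Then, given a net $x_i\to x$ in $X$, weak continuity of $f$ gives $(x_i,f(x_i))\to(x,f(x))$ in $E_\sigma$, and weak continuity of $\tilde f$ yields $\|f(x_i)\|^2=\tilde f(x_i,f(x_i))\to\tilde f(x,f(x))=\|f(x)\|^2$, so $\|f(x_i)\|\to\|f(x)\|$; \lemref{weak/strong} then upgrades the weak convergence of $(x_i,f(x_i))$ to strong convergence, and $f$ is strongly continuous.

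For the implications involving $(ii)$: if $f$ is strongly continuous then $f\in\mathcal E$ and $<f|$ is the operator $g\mapsto<f,g>$ on the Hilbert $C(X)$-module $\mathcal E$, which is plainly $C(X)$-linear and adjointable, its adjoint being the map $C(X)\to\mathcal E$, $a\mapsto fa$. Conversely, assuming $(ii)$, I would put $T=<f|$, let $T^*\colon C(X)\to C(X,E)$ be its adjoint and set $g=T^*(1)\in C(X,E)$; for every $h\in C(X,E)$ one has $<h,g>=<Th,1>=\overline{<f,h>}=<h,f>$ in $C(X)$, hence $(h(x)|g(x))=(h(x)|f(x))$ for all $x\in X$, and since there are enough continuous sections this forces $g=f$. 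Thus $f\in C(X,E)$ is strongly continuous, returning us to $(i)$.

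The step I expect to be the main obstacle is $(iii)\Rightarrow(i)$. The weaker information that $f$ is weakly continuous --- equivalently, by \propref{weak section}, that $\tilde f$ is continuous merely for the \emph{strong} topology of $E$ --- is not by itself enough to make $f$ strongly continuous; the genuine hypothesis that $\tilde f$ be continuous for the \emph{weak} topology of $E$ enters precisely once, to control the norms $\|f(x_i)\|$ along a net, and only after that does \lemref{weak/strong} close the argument.
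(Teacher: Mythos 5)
Your proof is correct and takes essentially the same route as the paper: the equivalence of (i) and (ii) by identifying the only possible adjoint of $<f|$ as $h\mapsto hf$ (your computation with $T^*(1)$ and the existence of enough continuous sections just makes the paper's terse remark explicit), (i)$\Rightarrow$(iii) straight from the definition of the weak topology, and (iii)$\Rightarrow$(i) by first getting weak continuity of $f$, then norm convergence $\|f(x_i)\|\to\|f(x)\|$ from weak continuity of $\tilde f$, and closing with \lemref{weak/strong}. The only (immaterial) difference is that you rederive the weak continuity of $f$ directly instead of quoting \propref{weak section}.
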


\begin{proof} The only possible candidate for the adjoint of $<f|: C(X,E)\ra C(X)$ is the map $|f>:C(X)\ra C(X,E)$ sending $h\in C(X)$ to $hf$. This map exists if and only if $f\in C(X,E)$. This proves the equivalence of $(i)$ and $(ii)$. The implication $(i)\Rightarrow (iii)$ results from the definition of the weak topology. Suppose that $(iii)$ holds. Let us show that $f$ is strongly continuous. The continuity of $\tilde f: E_\sigma\ra \C$ implies the continuity of $\tilde f: E\ra \C$; according to \propref{weak section}, $f$ is weakly continuous. Therefore, if $x_i$ tends to $x$, then $f(x_i)$ tends to $f(x)$ weakly. But then, $\|f(x_i)\|^2=\tilde f(f(x_i))$ tends to $\tilde f(f(x))=\|f(x)\|^2$. According to \lemref{weak/strong}, this implies that  $f(x_i)$ tends to $f(x)$ strongly.
\end{proof}

Assume that $X$ is compact metrizable. The space $C(X,E_\sigma)$ of weakly continuous sections can be identified with the space of $C(X)$-linear bounded maps from $C(X,E)$ to $C(X)$. It agrees with $C(X,E)$ if and only if $C(X,E)$ is a self-dual Hilbert module. This is the case for example when $E$ is a vector bundle in the usual sense (i.e. locally trivial finite dimensional) but also when $X$ is reduced to a point.

\begin{prop} Let $E$ be a Hilbert bundle over a topological space $X$. For $0<R<\infty$, we define the cylinder $C_R=\{(x,u)\in E: \|u\|_x\le R\}$. Then
\begin{enumerate}
\item if $X$ is compact, $C_R$ is a compact subset of $E_\sigma$;
\item if $X$ is locally compact, $C_R$ is a closed subset of $E_\sigma$.

\end{enumerate}

\end{prop}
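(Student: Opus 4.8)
The plan is to handle the two statements separately, both times exploiting the embedding $(id_X,j): E_\sigma\to X\times\mathcal E^*_\sigma$ from the preceding proposition, where $\mathcal E=C(X,E)$. For $(i)$ I first note that $|\langle j(x,u),f\rangle|=|(u|f(x))|\le\|u\|\,\|f\|_\infty$, so $\|j(x,u)\|_{\mathcal E^*}\le\|u\|$; hence $(id_X,j)$ maps $C_R$ into $X\times\overline{B}_R$, where $\overline{B}_R$ is the closed ball of radius $R$ of $\mathcal E^*$. By Banach--Alaoglu $\overline{B}_R$ is $*$-weakly compact, and $X$ is compact, so $X\times\overline{B}_R$ is compact; since $(id_X,j)$ is a homeomorphism onto its image, it will suffice to prove that $(id_X,j)(C_R)$ is closed in $X\times\overline{B}_R$, for then it is compact and so is $C_R$.

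To prove that closedness I would take a net $\big(x_i,j(x_i,u_i)\big)$ in $(id_X,j)(C_R)$, with $\|u_i\|\le R$, converging to $(x,\phi)$ in $X\times\overline{B}_R$, so that $x_i\to x$ and $(u_i|f(x_i))\to\langle\phi,f\rangle$ for every $f\in\mathcal E$, and then reconstruct the vector $u$. Since $X$ is compact, hence paracompact, there are sufficiently many continuous sections, so one may define $L: E_x\to\C$ by $L(f(x))=\langle\phi,f\rangle$. The points to check are that $L$ is well defined (if $f(x)=g(x)$ then $\|f(x_i)-g(x_i)\|\to 0$ by continuity of the norm, and $|(u_i|f(x_i)-g(x_i))|\le R\|f(x_i)-g(x_i)\|\to 0$), linear, and bounded with $\|L\|\le R$ (because $|L(f(x))|=\lim_i|(u_i|f(x_i))|\le R\lim_i\|f(x_i)\|=R\|f(x)\|$). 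The Riesz representation theorem then yields $u\in E_x$ with $L=(u\,|\,\cdot\,)$ and $\|u\|=\|L\|\le R$, whence $\langle\phi,f\rangle=(u|f(x))=\langle j(x,u),f\rangle$ for all $f$, i.e. $\phi=j(x,u)$ with $(x,u)\in C_R$.

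For $(ii)$ I would argue directly, using that a subset of a topological space is closed precisely when it contains the limit of every convergent net of its points. So let $(x_i,u_i)$ be a net in $C_R$ converging to $(x,u)$ in $E_\sigma$; then $x_i\to x$ in $X$. By local compactness choose a compact neighborhood $V$ of $x$, and, $V$ being compact hence paracompact, a continuous section $f: V\to E$ with $f(x)=u$. By definition of the weak topology $(f(x_i)|u_i)\to(f(x)|u)=\|u\|^2$, while $\|f(x_i)\|\to\|f(x)\|=\|u\|$ by continuity of the norm on $E$; combined with $|(f(x_i)|u_i)|\le\|f(x_i)\|\,\|u_i\|\le R\|f(x_i)\|$ this gives $\|u\|^2\le R\|u\|$, hence $\|u\|\le R$ and $(x,u)\in C_R$.

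I expect the only genuine obstacle to lie in the closedness step of $(i)$: identifying the $*$-weak limit $\phi$ as an evaluation functional $j(x,u)$ and, above all, controlling its norm by $R$ rather than merely knowing it is finite. This is exactly the place where one must combine the existence of sufficiently many continuous sections with the Cauchy--Schwarz inequality and the continuity of the norm on $E$. The rest — Banach--Alaoglu, the reduction through the homeomorphism $(id_X,j)$, and the whole of $(ii)$ — is routine.
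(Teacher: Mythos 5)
Your proof is correct. For part (i) you follow essentially the same route as the paper: embed $C_R$ via $(id_X,j)$ into $X\times \overline B_R$, invoke Banach--Alaoglu, and prove closedness by taking a net $(x_i,j(x_i,u_i))\to(x,\varphi)$ and identifying $\varphi$ as $j(x,u)$. The paper does this identification by showing $\langle\varphi,f\rangle=0$ whenever $f(x)=0$ and leaving the rest implicit; you instead construct the functional $L$ on $E_x$ directly, check well-definedness and the bound $\|L\|\le R$, and apply Riesz. This is a genuine improvement in completeness: the paper never explicitly verifies that the resulting $u$ satisfies $\|u\|\le R$ (i.e. that the limit lies in $(id_X,j)(C_R)$ and not merely in $(id_X,j)(E)$), whereas your argument delivers that bound for free. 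For part (ii) you take a genuinely different and more elementary route: the paper reduces to the compact case by restricting to the cylinder $C_R(E_V)$ over a compact neighborhood $V$ and using (i) together with a compactness/limit argument, while you prove closedness directly --- pick a local continuous section $f$ through $(x,u)$ (available since $V$ is compact, hence paracompact), use the definition of weak convergence to get $(f(x_i)|u_i)\to\|u\|^2$, and combine Cauchy--Schwarz with continuity of the norm to get $\|u\|^2\le R\|u\|$. Your version of (ii) does not depend on (i) at all and avoids the slightly delicate point in the paper's argument (uniqueness of weak limits when passing between $E_V$ and $E$), at the modest cost of redoing a Cauchy--Schwarz estimate; both arguments are sound.
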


\begin{proof} Let us assume that $X$ is compact. Since $(id_X,j)(C_R)$ is contained in $X\times B_R$, where $B_R$ is the closed ball of radius $R$ of ${\mathcal E}^*$ which is $*$-weakly compact, it suffices to show that $(id_X,j)(C_R)$ is  closed in $X\times {\mathcal E}^*_\sigma$. Let  $(x_i,u_i)$ be a net in $C_R$ such that $(x_i,j(x_i,u_i))$ converges to $(x,\varphi)$, where $\varphi\in {\mathcal E}^*$. Then $x_i$ converges to $x$. Let us show that $\varphi=j(x,u)$ for some $u\in E_x$. For that, it suffices to show that $<\varphi,f>=0$ for all  $f\in C(X,E)$ such that $f(x)=0$. Let $f$ and $\epsilon>0$ be given. There exists a neighborhood $V$ of $x$ such that $\|f(y)\|\le \epsilon/R$ for all $y\in V$. For $i$ large enough, $x_i$ belongs to $V$, thus 
$$|(u_i | f(x_i))|\le \|u_i\|\|f(x_i)\|\le R(\epsilon/R)$$
which implies $|<\varphi,f>|\le \epsilon$ and $<\varphi, f>=0$. 

Let us assume that $X$ is locally compact. Let  $(x_i,u_i)$ be a net in $C_R$ which converges weakly to $(x,u)$. The point $x$ has a compact neighborhood $V$. For $i$ large enough, $x_i$ belongs to $V$ and $(x_i,u_i)$ belongs to the cylinder $C_R(E_V)$ of the reduction $E_V$ of $E$ to $V$, which is compact. Since a net which is weakly convergent in $E_V$ is also weakly convergent in $E$, the limit $(x,u)$ belongs to the cylinder $C_R(E_V)$.
\end{proof}

\begin{cor}\label{compact} Let $E$ be a Hilbert bundle over a compact space $X$. Then bounded subsets are relatively compact in $E_\sigma$.
\end{cor}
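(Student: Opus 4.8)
The plan is to deduce this immediately from the preceding proposition about cylinders, so the ``proof'' is essentially a one-line observation together with a point-set topology remark. First I would unwind the definition: a bounded subset $A\subseteq E$ is, by definition, one on which the norm function is bounded, say $\|u\|_x\le R$ for all $(x,u)\in A$ and some $0<R<\infty$; this is precisely the statement that $A$ is contained in the cylinder $C_R=\{(x,u)\in E:\|u\|_x\le R\}$.

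Next I would invoke the preceding proposition twice. Since $X$ is compact, part (i) gives that $C_R$ is a compact subset of $E_\sigma$. Since $X$ is in particular locally compact, part (ii) gives that $C_R$ is a closed subset of $E_\sigma$. (One can alternatively note that the proposition exhibiting $(\mathrm{id}_X,j)\colon E_\sigma\to X\times{\mathcal E}^*_\sigma$ as a homeomorphism onto its image shows $E_\sigma$ is Hausdorff when $X$ is compact, so a compact subset is automatically closed; but quoting part (ii) is cleaner and covers the general case directly.)

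Finally I would conclude: the closure $\overline{A}$ of $A$ in $E_\sigma$ is contained in the closed set $C_R$, hence $\overline{A}$ is a closed subset of the compact space $C_R$, and a closed subset of a compact space is compact (no separation hypothesis needed). Therefore $\overline{A}$ is compact in $E_\sigma$, i.e.\ $A$ is relatively compact in $E_\sigma$. There is no real obstacle here; the only mild subtlety worth flagging is that one genuinely wants $C_R$ to be \emph{closed} in $E_\sigma$ and not merely compact, which is exactly the content of part (ii) of the preceding proposition.
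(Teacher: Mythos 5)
Your argument is correct and is exactly the intended one: the paper states this corollary without proof as an immediate consequence of the preceding proposition, a bounded set being contained in some cylinder $C_R$ which is compact (part (i)) in $E_\sigma$. Your extra care in quoting part (ii) (or Hausdorffness of $E_\sigma$) to ensure the closure of $A$ stays inside $C_R$ is a sensible refinement of the same approach, not a different route.
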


\begin{defn} We say that a subset $A$ of the bundle space $E$ of a Hilbert bundle over $X$ is fiberwise convex if for every $x\in X$, $A_x\defequal A\cap E_x$ is convex.
\end{defn}

\begin{prop} Let $E$ be a Hilbert bundle over a locally compact space $X$. Given a bounded subset $A$ of $E$, there exists a smallest fiberwise convex and weakly closed subset of $E$ containing $A$. It is bounded. This subset will be called the convex hull of $A$ and denoted by ${\rm conv}(A)$.
\end{prop}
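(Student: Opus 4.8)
The plan is to realize $\mathrm{conv}(A)$ as an intersection. Fix a finite number $R>0$ bounding the norm on $A$, i.e.\ $\|u\|_x\le R$ for all $(x,u)\in A$; this exists since $A$ is bounded. Let $\mathcal{F}$ denote the family of all subsets $B\subseteq E$ that are fiberwise convex, weakly closed, and contain $A$. The first point is that $\mathcal{F}$ is nonempty: the cylinder $C_R=\{(x,u)\in E:\|u\|_x\le R\}$ belongs to it, because it contains $A$, it is fiberwise convex (each slice $(C_R)_x$ is the closed ball of radius $R$ in the Hilbert space $E_x$, hence convex), and it is weakly closed by part (ii) of the preceding proposition, where the local compactness of $X$ is used.

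Next I would set $\mathrm{conv}(A)=\bigcap_{B\in\mathcal{F}}B$ and verify the four required properties. It contains $A$ since every member of $\mathcal{F}$ does. It is weakly closed, being an intersection of weakly closed subsets of $E_\sigma$. It is fiberwise convex: for each $x\in X$ one has $\mathrm{conv}(A)\cap E_x=\bigcap_{B\in\mathcal{F}}(B\cap E_x)$, an intersection of convex subsets of $E_x$, hence convex. It is the smallest such set essentially by construction: if $B'$ is any fiberwise convex, weakly closed subset of $E$ containing $A$, then $B'\in\mathcal{F}$, so $\mathrm{conv}(A)\subseteq B'$. Finally it is bounded, since $C_R\in\mathcal{F}$ forces $\mathrm{conv}(A)\subseteq C_R$, so the norm is bounded by $R$ on it.

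There is no serious obstacle here; the only step that genuinely uses hypotheses is the nonemptiness of $\mathcal{F}$, and more precisely the existence of a \emph{bounded} member of $\mathcal{F}$, which is exactly the content of the weak-closedness of cylinders over a locally compact base. Without it the intersection could fail to be bounded. One may add the remark that, fiberwise, $\mathrm{conv}(A)\cap E_x$ contains the weakly closed convex hull of $A_x$ formed inside the Hilbert space $E_x$; the reverse inclusion is not needed, and is in fact delicate, since the union over $x$ of the fiberwise weakly closed convex hulls need not be weakly closed in $E$. This is why the global intersection, rather than a fiberwise construction, is the right definition.
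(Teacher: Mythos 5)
Your proof is correct and is essentially the paper's own argument: the cylinder $C_R$ containing $A$ is fiberwise convex and weakly closed (by the preceding proposition over a locally compact base), so the intersection of all fiberwise convex weakly closed sets containing $A$ is nonempty, has the required properties, and is bounded since it lies in $C_R$. Nothing further is needed.
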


\begin{proof} By assumption $A$ is contained in some cylinder $C_R$, which is fiberwise convex and weakly closed. The intersection of all fiberwise convex and weakly closed subsets containing $A$, which is fiberwise convex and weakly closed, is the sought-after set.
\end{proof}

Given a continuous map $p:Y\ra X$ and a Hilbert bundle $\pi:E\ra X$, the pull-back bundle $p^*E$ is the Hilbert bundle over $Y$ defined as
$$p^*E=\{(y, u)\in Y\times E: p(y)=\pi(u)\}.$$
Its bundle projection is the restriction of the first projection. Its topology is the subspace topology. We denote by $P:p^*E\ra E$ the map defined by $P(y,u)=(p(y),u)$. We leave to the reader to check that, when $X$ and $Y$ are locally compact spaces, $(p^*E)_\sigma=p^*E_\sigma$, i.e. the weak topology of $p^*E$ agrees with the subspace topology of $Y\times E_\sigma$.

\begin{prop}\label{conv} Let $X,Y$ be locally compact spaces and let $p:Y\ra X$ be continuous and open. Let $E$ be a Hilbert bundle over $X$ and let $F=p^*E$ be its pull-back over $Y$. If $A$ is a bounded subset of $E$, then $B=P^{-1}(A)$ is a bounded subset of $F$ and ${\rm conv}(B)=P^{-1}({\rm conv}(A))$.
\end{prop}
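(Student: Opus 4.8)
The plan is to establish the two inclusions ${\rm conv}(B)\subseteq P^{-1}({\rm conv}(A))$ and $P^{-1}({\rm conv}(A))\subseteq {\rm conv}(B)$ separately. The first is formal; the second is where the hypothesis that $p$ is open does all the work. Throughout I use that for a bounded set, ${\rm conv}$ exists and equals the intersection of all fibrewise convex weakly closed subsets containing it, and the identification $(p^*E)_\sigma=p^*(E_\sigma)$ recalled above.

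For the boundedness of $B$ and the easy inclusion: the fibre of $F=p^*E$ over $y$ is canonically $E_{p(y)}$, with $P$ acting as the identity on it, so $\|(y,u)\|_F=\|P(y,u)\|_E$; since $P(B)\subseteq A$, the set $B$ is bounded and ${\rm conv}(B)$ is defined. Moreover $P\colon F_\sigma\to E_\sigma$ is continuous, being (under $(p^*E)_\sigma=p^*(E_\sigma)$) the restriction of the second projection $Y\times E_\sigma\to E_\sigma$. Hence $P^{-1}({\rm conv}(A))$ is weakly closed in $F$; it is fibrewise convex, its fibre over $y$ being the convex set ${\rm conv}(A)_{p(y)}$; and it contains $B=P^{-1}(A)$. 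By minimality, ${\rm conv}(B)\subseteq P^{-1}({\rm conv}(A))$, and since the right-hand side is bounded (it sits in a cylinder), so is ${\rm conv}(B)$.

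For the reverse inclusion I would fix a fibrewise convex weakly closed $C\subseteq F$ with $B\subseteq C$ and show $P^{-1}({\rm conv}(A))\subseteq C$; intersecting over all such $C$ then gives the claim. The idea is to push $C$ down to $E$ by $C'_x=\bigcap_{y\in p^{-1}(x)}C_y$ (with $C'_x=E_x$ when $x\notin p(Y)$), where $C_y\subseteq E_{p(y)}$ denotes the fibre of $C$ over $y$. Then $C'$ is fibrewise convex, and it contains $A$ because for $y\in p^{-1}(x)$ one has $A_x=B_y\subseteq C_y$, whence $A_x\subseteq C'_x$. The crucial, and only non-formal, step is that $C'$ is weakly closed in $E$: given a net $(e_i)$ in $C'$ with $e_i\to e_\infty$ weakly and $\pi(e_i)=x_i\to x_\infty=\pi(e_\infty)$, fix $y\in p^{-1}(x_\infty)$; openness of $p$ produces a subnet $(e_{i(j)})$ and a net $y_j\to y$ in $Y$ with $p(y_j)=x_{i(j)}$, and then $(y_j,e_{i(j)})\in C$ (since $e_{i(j)}\in C'_{x_{i(j)}}\subseteq C_{y_j}$) converges to $(y,e_\infty)$ in $F_\sigma=p^*(E_\sigma)$, so $(y,e_\infty)\in C$, i.e. $e_\infty\in C_y$; as $y\in p^{-1}(x_\infty)$ was arbitrary, $e_\infty\in C'_{x_\infty}$.

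With $C'$ fibrewise convex, weakly closed and containing $A$, minimality yields ${\rm conv}(A)\subseteq C'$, hence $P^{-1}({\rm conv}(A))\subseteq P^{-1}(C')\subseteq C$, the last inclusion because $u\in C'_{p(y)}\subseteq C_y$ forces $(y,u)\in C$. I expect the weak-closedness of the lowered set $C'$ to be the main obstacle: it is the single point where continuity of $p$ must be strengthened to openness, precisely in order to lift the converging net $(x_i)$ through $p$ near each preimage of the limit.
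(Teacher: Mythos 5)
Your proposal is correct and follows essentially the same route as the paper: the paper forms the ``contraction'' $C'=\{(y,u)\in F:\ p(y')=p(y)\Rightarrow (y',u)\in C\}$ and pushes it forward by $P$, which over $p(Y)$ is exactly your fibrewise intersection $\bigcap_{y\in p^{-1}(x)}C_y$, and it likewise uses openness of $p$ to lift weakly convergent nets and prove weak closedness before invoking minimality of ${\rm conv}(A)$ and ${\rm conv}(B)$, with the easy inclusion handled identically. The remaining differences are cosmetic: you pass to a subnet and use the stated identification $(p^*E)_\sigma=p^*E_\sigma$ (and your convention $C'_x=E_x$ for $x\notin p(Y)$ even covers non-surjective $p$), whereas the paper lifts the whole net and checks weak convergence directly against sections of the form $h\,(g\circ p)$, which is in substance the proof of that identification.
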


\begin{proof} It is clear that the range of the norm function is the same on $A$ and on $B$. Suppose that $C$ is a fiberwise convex and weakly closed set containing $B$. Consider its contraction
$$C'=\{(y,u)\in F: \quad p(y')=p(y) \Rightarrow (y',u)\in C\}.$$
It is  fiberwise convex, because an intersection of convex sets is convex. I claim that it is weakly closed.  Suppose that the net $(y_i, u_i)$ in $C'$ converges to $(y,u)$. Let $y'\in Y$ such that $p(y')=p(y)$. Since $p$ is open, there is a net $(y'_i)$ converging to $y'$ such that $p(y'_i)=p(y_i)$ for all $i$. Then $(y'_i,u_i)$ belongs to $C$. Let us show that $(y'_i,u_i)$ converges to $(y',u)$. Let $V$ be a compact neighborhood of $y'$. According to \cite[Proposition II.14.1]{fd:representations}, the sums of continuous sections of the form $f(y)=h(y)g\circ p(y)$ where $h\in C(V)$ and $g\in C(p(V),E)$ are dense in $C(V,p^*E)$ in the sup-norm topology; therefore, it suffices to check the convergence on such a section $f=h g\circ p$. Then
$(u_i|f(y'_i))=h(y'_i)(u_i|g\circ p(y_i))$ converges to $h(y')(u| g\circ p(y))=(u|f(y'))$. Since $C$ is weakly closed, $(y',u)$ belongs to $C$. Therefore $(y,u)$ belongs to $C'$ as claimed. Note that $P(C')$ is fiberwise convex and bounded. It is also weakly closed: let $(y_i, u_i)$ be a net in $C'$ such that $(p(y_i), u_i)$ converges to $(x, u)$. Let $y\in Y$ such that $p(y)=x$. There exists a net $(y'_i)$ converging to $y$ such that $p(y'_i)=p(y_i)$ for all $i$. Then $(y'_i,u_i)$ belongs also to $C'$ and the net $(y'_i, u_i)$ converges to $(y,u)$. Thus $(y,u)$ belongs to $C'$ and $(x,u)$ belongs to $P(C')$. Since $P(C')$ contains $A$, it contains ${\rm conv}(A)$.  Therefore $C\supset P^{-1}(P(C')$ contains $P^{-1}({\rm conv}(A))$. This gives the inclusion
${\rm conv}(B)\supset P^{-1}({\rm conv}(A))$. On the other hand $P^{-1}({\rm conv}(A))$ is a subset of $F$ containing $B$ which is fiberwise convex and weakly closed. Hence it contains ${\rm conv}(B)$.

\end{proof}

Let us assume now that $E$ is a $G$-Hilbert bundle, where $G$ is a topological groupoid over $X$. Recall that we assume that for all $\gamma\in G$, $L(\gamma):E_{s(\gamma)}\ra E_{r(\gamma)}$ is a linear isometry and that the action map $G*E\ra E$ is continuous. We are also given a continuous cocycle $c:G\ra r^*E$ which defines the affine isometric action of $G$ on $E$ given by:

$$\gamma u=L(\gamma)u+c(\gamma),\quad\forall (\gamma,u)\in G*E.$$

\begin{prop}\label{weak continuity} Let $(\gamma_i,u_i)$ be a net  converging to $(\gamma,u)$ in $G*E_\sigma$. If $(\|u_i\|)$ is bounded, then $(\gamma_i u_i)$ converges to $\gamma u$ in $E_\sigma$.
\end{prop}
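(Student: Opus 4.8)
The plan is to reduce the statement to a purely ``linear'' weak‑convergence claim and then to verify that claim by transporting the test sections through the isometries $L(\gamma_i)$ and applying \lemref{joint continuity}. Write $\gamma_i u_i=L(\gamma_i)u_i+c(\gamma_i)$. Since $c$ is continuous and $\gamma_i\to\gamma$, the net $c(\gamma_i)$ converges to $c(\gamma)$ in $E$, hence also in $E_\sigma$. The weak topology is ``additive'' in the evident sense — for a continuous section $f$ near a point, $(f(x_i)\,|\,v_i+w_i)=(f(x_i)\,|\,v_i)+(f(x_i)\,|\,w_i)$ — so it suffices to prove that $L(\gamma_i)u_i$ converges to $L(\gamma)u$ in $E_\sigma$.

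To prove this, first note $r(\gamma_i)\to r(\gamma)$ by continuity of $r$, so by the definition of the weak topology I only have to show that for every continuous section $f$ defined on a compact neighborhood $V$ of $r(\gamma)$ one has $(f(r(\gamma_i))\,|\,L(\gamma_i)u_i)\to(f(r(\gamma))\,|\,L(\gamma)u)$ (the indices with $r(\gamma_i)\notin V$ being eventually absent). Here I would use that each $L(\gamma_i)$, being a surjective linear isometry with inverse $L(\gamma_i^{-1})$, is unitary, whence
$$(f(r(\gamma_i))\,|\,L(\gamma_i)u_i)=(L(\gamma_i^{-1})f(r(\gamma_i))\,|\,u_i).$$
Now $f(r(\gamma_i))\to f(r(\gamma))$ strongly in $E$; the pairs $(\gamma_i^{-1},f(r(\gamma_i)))$ lie in $G*E$ and converge there to $(\gamma^{-1},f(r(\gamma)))$ (using continuity of inversion in $G$ and of $f$), so by continuity of the action map $G*E\ra E$ the net $L(\gamma_i^{-1})f(r(\gamma_i))$ converges strongly to $L(\gamma^{-1})f(r(\gamma))$; these vectors sit in $E_{s(\gamma_i)}$, resp. $E_{s(\gamma)}$. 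Meanwhile $(s(\gamma_i),u_i)\to(s(\gamma),u)$ in $E_\sigma$ and is bounded by hypothesis, so \lemref{joint continuity} yields
$$(L(\gamma_i^{-1})f(r(\gamma_i))\,|\,u_i)\to(L(\gamma^{-1})f(r(\gamma))\,|\,u)=(f(r(\gamma))\,|\,L(\gamma)u),$$
which is exactly the required convergence; combined with the previous paragraph it gives $\gamma_i u_i\to\gamma u$ in $E_\sigma$.

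The only delicate point — the step I expect to be the main obstacle — is the bookkeeping that makes \lemref{joint continuity} applicable: one must check that the transported sections $L(\gamma_i^{-1})f(r(\gamma_i))$ really converge \emph{strongly} (this is where continuity of the groupoid action on $E$, together with continuity of the section $f$ and of inversion in $G$, is genuinely used), and that the \emph{bounded} weakly convergent net fed into the lemma is $(s(\gamma_i),u_i)$ rather than the possibly unbounded $L(\gamma_i)u_i$. The remaining ingredients — continuity of $r$, $s$ and inversion, the unitarity identity for $L(\gamma_i)$, and the additivity of the weak topology — are routine.
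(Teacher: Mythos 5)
Your proposal is correct and follows essentially the same route as the paper: split $\gamma_iu_i=L(\gamma_i)u_i+c(\gamma_i)$, pass $L(\gamma_i)$ onto the test section via $(f\circ r(\gamma_i)\,|\,L(\gamma_i)u_i)=(L(\gamma_i^{-1})f\circ r(\gamma_i)\,|\,u_i)$, apply Lemma~\ref{joint continuity} to the bounded weakly convergent net $(u_i)$ against the strongly convergent transported sections, and handle the cocycle term by (strong) continuity of $c$ and of the scalar product. Your extra bookkeeping (working with local sections over a compact neighborhood of $r(\gamma)$, as in the definition of the weak topology, and isolating the boundedness hypothesis) only makes explicit what the paper leaves implicit.
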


\begin{proof}
Let $f\in C(X,E)$. We have
$$\begin{array}{ccc}
(\gamma_iu_i|f\circ r(\gamma_i))&=&(L(\gamma_i)u_i|f\circ r(\gamma_i))+(c(\gamma_i)|f\circ r(\gamma_i))\\
&=&(u_i|L(\gamma^{-1}_i)f\circ r(\gamma_i))+(c(\gamma_i)|f\circ r(\gamma_i))\\
\end{array}$$
By continuity of the action, $L(\gamma^{-1}_i)f\circ r(\gamma_i)$ tends to $L(\gamma^{-1})f\circ r(\gamma)$ in $E$ and by \lemref{joint continuity}, $(u_i|L(\gamma^{-1}_i)f\circ r(\gamma_i))$ tends to $(u|L(\gamma^{-1})f\circ r(\gamma))$. By joint continuity of the scalar product in $E$, $(c(\gamma_i)|f\circ r(\gamma_i))$ tends to $(c(\gamma)|f\circ r(\gamma))$. Hence the result.
\end{proof}

We say that a subset $A$ of $E$ is invariant if for every $(\gamma,u)\in G\times A$ such that $s(\gamma)=\pi(u)$, $\gamma u$ belongs to $A$. Let us introduce the pull-back $s^*E=G*E$ of $E$ along the source map $s: G\ra X$ and the map $W:s^*E\ra s^*E$ defined by $W(\gamma, u)=(\gamma^{-1},\gamma u)$. This map is the fundamental involution of the action which is ubiquitous in the theory of quantum groups. For its use in a similar context, see \cite[Section 4]{leg:KKG}. Let also define the map $S: s^*E\ra E$ by $S(\gamma,u)=(s(\gamma),u)$. Then, we have the following convenient criterium for invariance.
\begin{lem} A subset $A$ of $E$ is invariant if and only if $W(S^{-1}(A))=S^{-1}(A)$.
\end{lem}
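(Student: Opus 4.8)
The plan is to reduce the stated equivalence to the definition of invariance by unwinding what $S^{-1}(A)$ and $W$ are, the one nontrivial preliminary being that $W$ is a well-defined involution of $s^*E$.

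First I would record the structural facts. An element of $s^*E=G*E$ is a pair $(\gamma,u)$ with $s(\gamma)=\pi(u)$; under $W$ it is sent to $(\gamma^{-1},\gamma u)$, which again lies in $s^*E$ because $\pi(\gamma u)=r(\gamma)=s(\gamma^{-1})$. Moreover $W$ is an involution: $W^2(\gamma,u)=W(\gamma^{-1},\gamma u)=(\gamma,\gamma^{-1}(\gamma u))=(\gamma,u)$, since the action is a genuine groupoid action and the unit $s(\gamma)=\gamma^{-1}\gamma$ acts as the identity on $E_{s(\gamma)}$. On the other side, since $S(\gamma,u)=(s(\gamma),u)$ lies in $A$ exactly when $u\in A$, we have the explicit description $S^{-1}(A)=\{(\gamma,u)\in s^*E:\ u\in A\}$.

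For the ``if'' direction, assume $A$ is invariant. Given $(\gamma,u)\in S^{-1}(A)$ we have $u\in A$ and $s(\gamma)=\pi(u)$, so $\gamma u\in A$ by invariance, hence $W(\gamma,u)=(\gamma^{-1},\gamma u)\in S^{-1}(A)$. This gives $W(S^{-1}(A))\subseteq S^{-1}(A)$; applying $W$ to this inclusion and using $W^2=\mathrm{id}$ yields the reverse inclusion, so $W(S^{-1}(A))=S^{-1}(A)$. Conversely, assume $W(S^{-1}(A))=S^{-1}(A)$, and let $(\gamma,u)\in G*E$ with $u\in A$. Then $(\gamma,u)\in S^{-1}(A)$, so $W(\gamma,u)=(\gamma^{-1},\gamma u)$ lies in $S^{-1}(A)$, which by the description above says precisely that $\gamma u\in A$. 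Hence $A$ is invariant.

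The only point needing any care is the verification that $W$ maps $s^*E$ into itself and squares to the identity; this is where one uses the compatibility $s(\gamma)=\pi(u)$ and the fact that units act trivially. Everything after that is set-theoretic bookkeeping, so I expect no real obstacle.
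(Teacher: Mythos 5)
Your proof is correct; the paper states this lemma without proof, and your argument is exactly the intended routine verification. The only point worth making explicit is the one you flag: $W^2=\mathrm{id}$ requires that units act trivially for the \emph{affine} action, i.e.\ that $c(x)=0$ for $x\in G^{(0)}$, which follows from the cocycle identity applied to $x=xx$ (so that $c(\gamma^{-1})+L(\gamma^{-1})c(\gamma)=c(s(\gamma))=0$), and with that noted the rest is, as you say, set-theoretic bookkeeping.
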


\begin{prop}\label{inv conv} Let $E$ be a $G$-Hilbert bundle, where $G$ is a locally compact groupoid. We consider the affine isometric action of $G$ on $E$ defined by a continuous cocycle $c:G\ra r^*E$. Let $A$ be a bounded subset of $E$. If $A$ is invariant, then its convex hull ${\rm conv}(A)$ is also invariant.
\end{prop}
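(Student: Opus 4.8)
The plan is to reduce the statement to a fixed‑point property of the fundamental involution $W$ on $s^*E=G*E$, combining the criterion for invariance recalled just above (a subset $C\subseteq E$ is invariant if and only if $W(S^{-1}(C))=S^{-1}(C)$) with \propref{conv}. Put $B=S^{-1}(A)$. The norm function takes the same values on $B$ as on $A$, so $B$ is bounded; and since $A$ is invariant, $W(B)=B$. Because $s\colon G\ra X$ is continuous and open (our standing assumption on groupoids), \propref{conv} applied with $p=s$ — for which the associated map $P$ is exactly $S$ — gives ${\rm conv}(B)=S^{-1}({\rm conv}(A))$. Hence, by the criterion for invariance, proving that ${\rm conv}(A)$ is invariant amounts to proving
\[
W({\rm conv}(B))={\rm conv}(B).
\]

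Next I would record the structure of $W$. It is an involution of $s^*E$ (indeed $L(e)={\rm id}$ and $c(e)=0$ for units $e$, both immediate from the cocycle identity), and it carries the fiber $\{\gamma\}\times E_{s(\gamma)}$ onto the fiber $\{\gamma^{-1}\}\times E_{r(\gamma)}$ by the affine isometry $u\mapsto L(\gamma)u+c(\gamma)$. Since an affine map sends convex sets to convex sets and $W$ merely permutes the fibers in pairs, $W({\rm conv}(B))$ is fiberwise convex; moreover it contains $W(B)=B$. It therefore suffices to show that $W({\rm conv}(B))$ is weakly closed: granting this, $W({\rm conv}(B))$ is a fiberwise convex, weakly closed set containing $B$, hence contains the smallest such set, namely ${\rm conv}(B)$; applying the bijection $W$ to the inclusion $W({\rm conv}(B))\supseteq{\rm conv}(B)$ and using $W^2={\rm id}$ yields the reverse inclusion, and therefore equality.

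The genuine work — and the step I expect to be the main obstacle — is the weak closedness of $W({\rm conv}(B))$, where boundedness and \propref{weak continuity} enter. Fix $R$ with $B\subseteq C_R(s^*E)$; since the convex hull of a bounded set is bounded, ${\rm conv}(B)\subseteq C_R(s^*E)$. Let $(\delta_i,w_i)$ be a net in $W({\rm conv}(B))$ converging weakly in $s^*E$ to $(\delta,w)$, and write $(\delta_i,w_i)=W(\gamma_i,u_i)$ with $(\gamma_i,u_i)\in{\rm conv}(B)$; then $\gamma_i=\delta_i^{-1}\ra\delta^{-1}$ in $G$, $w_i=\gamma_iu_i$, and $\|u_i\|\le R$. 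Choosing a compact neighborhood $K$ of $\delta^{-1}$ in $G$, the tail of $(\gamma_i,u_i)$ lies in the cylinder $C_R$ of the reduction of $s^*E$ to $K$, which is weakly compact because $K$ is compact; passing to a subnet we may assume $(\gamma_j,u_j)\ra(\gamma_0,u_0)$ weakly, with $\gamma_0=\delta^{-1}$ ($K$ being Hausdorff) and $(\gamma_0,u_0)\in{\rm conv}(B)$ (the latter being weakly closed). By \propref{weak continuity} the bounded net $(\gamma_j,u_j)$ satisfies $w_j=\gamma_ju_j\ra\gamma_0u_0$ weakly; but also $w_j\ra w$ weakly, and both limits lie in the fiber over $s(\delta)$, so $w=\gamma_0u_0$ and $(\delta,w)=W(\gamma_0,u_0)\in W({\rm conv}(B))$. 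This establishes the weak closedness, and the proposition follows.
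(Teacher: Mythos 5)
Your proposal is correct, and its skeleton is the paper's: reduce, via the invariance criterion $W(S^{-1}(\cdot))=S^{-1}(\cdot)$ and Proposition~\ref{conv} with $p=s$, to a fixed-point statement for the fundamental involution $W$ on $s^*E$, then exploit that $W$ permutes fibers by affine isometries together with the minimality of the convex hull. Where you genuinely diverge is the crucial weak-closedness step. The paper shows, for an arbitrary bounded weakly closed $C\subset s^*E$, that $W(C)$ is weakly closed by applying Proposition~\ref{weak continuity} \emph{directly to the weakly convergent net} $(\gamma_i,u_i)$ in $W(C)$; the needed norm bound on $u_i$ comes from writing $u_i=L(\gamma_i^{-1})(\gamma_i u_i-c(\gamma_i))$ and using local compactness of $G$ to bound $\|c(\gamma_i)\|$ along the net. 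This needs no compactness of cylinders and no subnets, and yields the general identity ${\rm conv}(W(B))=W({\rm conv}(B))$, which is then specialized to $B=S^{-1}(A)$. You instead work with the preimages $(\gamma_i,u_i)\in{\rm conv}(B)$, where the bound $\|u_i\|\le R$ is free, but since that net is not known to converge weakly you invoke weak compactness of the cylinder over a compact neighborhood of $\delta^{-1}$ (the earlier cylinder proposition, i.e.\ an Alaoglu-type input) to extract a convergent subnet, then apply Proposition~\ref{weak continuity} and identify limits; you also use $W^2={\rm id}$ to get both inclusions from one, where the paper argues symmetrically. Both routes are sound; the paper's is leaner, yours trades the cocycle estimate for compactness. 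One small point you assert without justification: that $w_j\to w$ and $w_j\to\gamma_0u_0$ weakly with both limits in $E_{s(\delta)}$ forces $w=\gamma_0u_0$. This does hold, but it deserves a line: over a compact (hence paracompact) neighborhood of $s(\delta)$ there are continuous sections through any vector of $E_{s(\delta)}$, so distinct points of a fiber are separated by the weak neighborhoods $U(V;f;\epsilon)$, making weak limits within a fixed fiber unique.
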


\begin{proof} The fundamental involution $W:s^*E\ra s^*E$ sends the fibre $E_\gamma=E_{s(\gamma)}$ onto the fibre $E_{\gamma^{-1}}=E_{r(\gamma)}$ through the affine map $u\mapsto \gamma u$. Therefore, it respects fiberwise convex sets. It also sends bounded weakly closed sets onto weakly closed sets. Suppose indeed that $B\subset s^*E$ is bounded and weakly closed. Let $(\gamma_i, u_i)$ be a net in $W(B)$ converging weakly to $(\gamma, u)$. Then $(\gamma_i)$ converges to $\gamma$; in particular, the net $(\|c(\gamma_i)\|)$ is bounded. The net $(\|\gamma_i u_i\|)$ is also bounded because $(\gamma_i^{-1},\gamma_iu_i)$ belongs to the bounded set $B$. Since
$u_i=L(\gamma_i^{-1})(\gamma_i u_i-c(\gamma_i))$, the net $(\|u_i\|)$ is bounded. According to \propref{weak continuity}, $(\gamma_i u_i)$ converges to $\gamma u$ in $E_\sigma$, therefore $(\gamma_i^{-1},\gamma_i u_i)$ converges to $(\gamma^{-1},\gamma u)$ in $(s^*E)_\sigma$. Since $B$ is weakly closed, $(\gamma^{-1},\gamma u)$ belongs to $B$ and $(\gamma, u)$ belongs to $W(B)$. Therefore, if $B$ is a bounded subset of $s^*E$, $W({\rm conv}(B))$ is a fiberwise convex weakly closed set containing $W(B)$. This shows that the convex hull ${\rm conv}(W(B))$ exists and is contained in $W({\rm conv}(B))$. If moreover $W(B)$ is bounded, the same argument gives ${\rm conv}(B)\subset W({\rm conv}(W(B)))$, hence the equality  ${\rm conv}(W(B))=W({\rm conv}(B))$. Applied to $B=S^{-1}(A)$, where $A$ is an invariant bounded subset of $E$, this gives
${\rm conv}(W(S^{-1}(A)))=W({\rm conv}(S^{-1}(A)))$. Since $W(S^{-1}(A))=S^{-1}(A)$, we get ${\rm conv}(S^{-1}(A))=W({\rm conv}(S^{-1}(A)))$. Since, according to \propref{conv}, ${\rm conv}(S^{-1}(A))=S^{-1}({\rm conv}(A))$, this gives the invariance of ${\rm conv}(A)$.
\end{proof}

 We are now ready to prove the existence of a weakly continuous equivariant section.
 
 \begin{thm}\label{weak Renault} Let $G$ be a minimal locally compact groupoid on a compact space $X$, let $E$ be a continuous $G$-Hilbert bundle and let $c:G\ra r^*E$ be a continuous cocycle.  If there exists $x\in X$ such that $\|c(G_x)\|$ is bounded, then there exists a weakly continuous section $f:X\ra E$  such that
 $$\forall \gamma\in G,\quad c(\gamma)=f\circ r(\gamma)-L(\gamma)f\circ s(\gamma).$$
\end{thm}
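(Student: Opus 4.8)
The plan is to realise $f$ as a fibrewise \emph{circumcenter} (Chebyshev center) of a suitable invariant subset of $E$, following the circumcenter argument of \cite[Section 4]{cnp:bdd orbits}. Fix $x$ with $\|c(G_x)\|\le R$. The orbit $c(G_x)=\{\gamma 0_x:\gamma\in G_x\}$ of the zero section at $x$ is a bounded subset of $E$, and it is invariant: for a composable product $\delta\gamma$ with $\gamma\in G_x$ one has $\delta\,c(\gamma)=(\delta\gamma)\,0_x=c(\delta\gamma)\in c(G_x)$. Let $\Omega$ be the weak closure of $c(G_x)$ in $E$ and set $B={\rm conv}(\Omega)$. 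By \propref{weak continuity} together with the openness of $s$, $\Omega$ is invariant, hence so is $B$ by \propref{inv conv}; $B$ is bounded and, by \corref{compact}, weakly compact. Moreover every fibre $B_y$ is non-empty: minimality makes the orbit $r(G_x)$ dense in $X$, so there is a net $\gamma_i\in G_x$ with $r(\gamma_i)\to y$, and since $c(\gamma_i)$ stays in a fixed cylinder $C_R$, which is weakly compact, a subnet of $c(\gamma_i)$ converges weakly to a point of $E_y$ (by continuity of $\pi$ on $E_\sigma$), which lies in $\Omega_y\subseteq B_y$.

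Next, for each $y$ let $f(y)$ be the circumcenter of $B_y$: since $B_y$ is a bounded subset of the Hilbert space $E_y$, the function $w\mapsto\sup_{u\in B_y}\|u-w\|$ has a unique minimiser, the standard minimising-net argument based on the parallelogram identity applying fibre by fibre; write $\rho(y)$ for the corresponding circumradius. For $\gamma\in G$ the affine isometry $u\mapsto\gamma u=L(\gamma)u+c(\gamma)$ maps $B_{s(\gamma)}$ bijectively onto $B_{r(\gamma)}$ — bijectivity coming from the invariance of $B$ applied both to $\gamma$ and to $\gamma^{-1}$ — and an affine isometry carries circumcenters to circumcenters. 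Hence $\gamma f(s(\gamma))=f(r(\gamma))$, that is, $c(\gamma)=f\circ r(\gamma)-L(\gamma)f\circ s(\gamma)$. In particular $\rho$ is constant along $G$-orbits, so it equals a constant $\rho_0$ on the dense orbit $r(G_x)$.

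It remains to prove that $f$ is weakly continuous, which is the heart of the matter. Let $y_i\to y$ in $X$. Since $\|f(y_i)\|\le R$ and $C_R$ is weakly compact while $\pi:E_\sigma\to X$ is continuous, it suffices to show that every weak cluster point $v$ of $(f(y_i))$ equals $f(y)$; passing to a subnet we may assume $f(y_i)\to v$ weakly, so that $v\in E_y$ and $v\in B_y$ (because $B$ is weakly closed). By uniqueness of the circumcenter it is then enough to check that $\sup_{u\in B_y}\|u-v\|\le\rho(y)$. The plan for this has two parts: (a) show that $y\mapsto B_y$ is lower semicontinuous, i.e. that every $u\in B_y$ is a weak limit of a net $u_i\in B_{y_i}$ — this uses the density of the orbit and the description of $\Omega$ as a weak closure — so that by weak lower semicontinuity of the norm $\|u-v\|\le\liminf_i\|u_i-f(y_i)\|\le\liminf_i\rho(y_i)$; and (b) show that $\liminf_i\rho(y_i)\le\rho(y)$, i.e. bound the circumradii of the nearby fibres from above in the limit, exploiting that $\rho\equiv\rho_0$ on the dense orbit and that $B$ is the convex hull of an orbit closure contained in the fixed weakly compact cylinder $C_R$. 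Granting (a) and (b) we get $\sup_{u\in B_y}\|u-v\|\le\liminf_i\rho(y_i)\le\rho(y)$, whence $v=f(y)$; so $f$ is weakly continuous, and together with the displayed cocycle identity this proves the theorem.

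The step I expect to be the main obstacle is (b). Because the norm on a Hilbert bundle is only \emph{weakly} lower semicontinuous, a priori the circumradii $\rho(y_i)$ of the neighbouring fibres could jump up in the limit — norm "escaping" into the weak limit — and ruling this out is precisely where minimality of $G$ and boundedness of the orbit $c(G_x)$ are genuinely used. All the other ingredients (existence and equivariance of the circumcenter, weak compactness of cylinders, invariance of fibrewise convex hulls, joint continuity of the scalar product on bounded nets) are either elementary Hilbert-space geometry or already available from \propref{weak continuity}, \propref{inv conv}, \lemref{joint continuity} and \corref{compact}.
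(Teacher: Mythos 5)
The equivariance half of your argument is sound: the orbit $c(G_x)$ is indeed invariant, its hull $B$ is invariant, bounded and weakly compact by \propref{inv conv} and \corref{compact}, and an affine isometry carries the Chebyshev center of a bounded set to that of its image, so the displayed cocycle identity follows once $f$ is well defined. But the proof is genuinely incomplete exactly where you flag it: you never establish (b), $\liminf_i\rho(y_i)\le\rho(y)$, and (a) is likewise only asserted. This is not a technicality that minimality and boundedness will yield with more work along the same lines. Your $f$ is the fibrewise ``centre'' selection, i.e.\ the construction based on the lemma of the centre \cite[Lemma 2.2.7]{bhv:T} which the concluding remarks of the paper discuss: such an $f$ is automatically equivariant (and measurable in the measure-theoretic setting of Anantharaman-Delaroche), but its \emph{continuity} is precisely what is problematic, with \cite[Example 17]{cnp:bdd orbits} cited as the warning. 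Concretely, constancy of $\rho$ on the dense orbit $r(G_x)$ gives no control over the fibres $B_{y_i}$ along an arbitrary net $y_i\to y$: $B$ is a weakly closed hull taken in the total space, its fibres over points outside the orbit are not described by your construction, and nothing rules out that they are large enough to make the circumradii, and with them the circumcenters, jump. Since the theorem is exactly the assertion that a (weakly) continuous equivariant choice exists, leaving (a) and (b) open means the heart of the proof is missing.

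The paper closes this gap by a different mechanism which never requires any regularity of a centre selection. Instead of picking a point in each (possibly large) fibre of ${\rm conv}(A)$, it applies Zorn's lemma to obtain a \emph{minimal} weakly compact, fibrewise convex, invariant, non-empty subset $M\subset{\rm conv}(A)$; minimality of $G$ gives $\pi(M)=X$; and uniform convexity combined with minimality of $M$ shows each fibre $M_x$ is a singleton (if $u_1\neq u_2$ in $M_x$, the orbit of their midpoint must meet a weakly open set $U$ on which the norm is close to $\sup_M\|\cdot\|$, since otherwise $M\setminus U$ would be a strictly smaller weakly compact convex invariant set; uniform convexity then forces $\|u_1-u_2\|<\epsilon R$ for every $\epsilon$). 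Weak continuity of $f=\left(\pi|_M\right)^{-1}$ is then automatic, being the inverse of a continuous bijection from the weakly compact set $M$ onto the Hausdorff space $X$. So continuity is obtained structurally, from singleton fibres of a compact invariant set, rather than by proving semicontinuity of circumradii. To repair your proof you would either have to supply a genuinely new argument for (a) and (b) --- which \cite[Example 17]{cnp:bdd orbits} suggests is unlikely to be routine --- or replace the circumcenter selection by this minimal-invariant-set argument.
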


\begin{proof} This part follows closely \cite{cnp:bdd orbits}. We consider the affine action of $G$ on $E$ defined by the cocycle $c$. The assumption is the existence of a bounded orbit $A$. The conclusion is the existence of a weakly continuous equivariant section. \propref{inv conv} gives the existence of a non-empty weakly closed, fiberwise convex, invariant and bounded subset, namely ${\rm conv}(A)$. Since $X$ is compact, according to \corref{compact}, this set is weakly compact. The family of all weakly compact fiberwise convex and invariant non-empty subsets of $E$ ordered by inclusion is inductive. By Zorn's lemma, there exists a minimal weakly compact fiberwise convex invariant non-empty subset $M$. Then $\pi(M)$ is a non-empty closed invariant subset of $X$. By minimality of $G$, $\pi(M)=X$. This says that for all $x\in X$, the fiber $M_x=M\cap E_x$ has at least one element. We are going to show that $M_x$ has at most one element. This is a classical trick which uses the uniform convexity of the Hilbert spaces $E_x$. Hilbert spaces have the uniform convexity module $\delta=\delta(\epsilon)=\sqrt{(1-{1\over 4}\epsilon^2)}$. Recall that this means that for $u_1,u_2\in E_x$,
$$\|u_1\|\le 1, \|u_2\|\le 1, \|u_1-u_2\|\ge \epsilon\quad \Longrightarrow\quad \|{1\over 2}(u_1+u_2)\|\le 1-\delta.$$
We fix $\epsilon>0$. We let $R=\sup_{u\in M}\|u\|$ and choose $u\in M$ such that $\|u\|>(1-\delta^2)R$. We choose $f\in C(X,E)$ such that $f\circ\pi(u)=u/\|u\|$. Let $V=\{y\in X: \|f(y)\|<1+\delta\}$. It is an open neighborhood of $\pi(u)$ in $X$.

Let $x$ be an arbitrary point in $X$ and assume that $u_1,u_2$ belong to the fiber $M_x$. The midpoint $m={1\over 2}(u_1+u_2)$ also belongs to the convex set $M_x$. Let us show that the orbit of $m$ meets the non-empty weakly open set
$$U=\{(y,e)\in E: y\in V,\quad |(e|f(y)|>(1-\delta^2)R\}.$$
If not, this orbit would be contained in the fiberwise convex weakly closed subset $M\setminus U$ and so would be its convex hull. This would contradict the minimality of $M$ because $M\setminus U$ is strictly contained in $M$: it does not contain $u$. Thus, there exists $\gamma\in G_x$ such that $\gamma m\in U$. Then, we must have $\|\gamma m\|> (1-\delta)R$. Since $G$ acts by affine transformations, $\gamma m$ is the midpoint of $\gamma u_1$ and $\gamma u_2$. Moreover, since these points belong to $M$, they satisfy $\|\gamma u_i\|\le R$. The above uniform convexity condition implies that $\|\gamma u_1-\gamma u_2\|<\epsilon R$. Since $G$ acts by isometries, this implies that $\|u_1-u_2\|<\epsilon R$. Since $\epsilon$ is arbitrary, this implies $u_1=u_2$.

Thus, the restriction of the bundle projection $\pi_{|M}: M\ra X$ is a bijection. Since it is weakly continuous and $M$ is compact with respect to the weak topology of $E$, its reciprocal map $f: X\ra M$ is weakly continuous. The invariance of $M$ says exactly that for all $\gamma\in G$, $\gamma f\circ s(\gamma)=f\circ r (\gamma)$.
\end{proof}

\section{Continuity of $G$-equivariant weakly continuous sections.}

As before, we assume that the base space $X$ of the Banach bundle $E$ is locally compact.
We are going to show that an equivariant weakly continuous section is norm continuous. Again, it is an adaptation of the proof given in \cite{cnp:bdd orbits}. However, it is no longer possible to use the oscillation function as in \cite{cnp:bdd orbits}: for a section $f:X\ra E$ of a Banach bundle, one cannot compare directly the vectors $f(x)$ and $f(y)$ for $x\not=y$ since they  belong to different spaces.

\begin{defn}\label{module of continuity} We define the module of continuity of a section $f:X\ra E$ at $x$ as
$\omega(x)=\inf\{\sup_{y\in V}\|f(y)-g(y)\|\}$, where the infimum is taken over all pairs $(V,g)$, where $V$ is an open neighborhood of $x$ and $g:V\ra E$ is a continuous section over $V$.
\end{defn}

\begin{prop}
Let $f:X\ra E$ be a section of a Banach bundle $\pi:E\ra X$ and $x\in X$. Then \tfae
\begin{enumerate}
\item $f$ is continuous at $x$;
\item its module of continuity at $x$ vanishes.
\end{enumerate}
\end{prop}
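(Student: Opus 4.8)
The plan is to prove the two implications separately, in both directions using the description of the topology of $E$ by the tubes $T(g,V,\epsilon)$ and the availability of continuous local sections. Since both $(i)$ and $(ii)$ are local properties at $x$, and since $X$ is locally compact and locally Hausdorff, I would first pass to a compact Hausdorff (hence paracompact) neighborhood $V_0$ of $x$ and to the reduction $E_{V_0}$, so that the quoted results of \cite{fd:representations} apply verbatim: every point of $E_{V_0}$ lies on a continuous section, and the tubes form a base for the topology of $E_{V_0}$.

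For $(i)\Rightarrow(ii)$ I would take a continuous section $g$ near $x$ with $g(x)=f(x)$. If $f$ is continuous at $x$, then as $y\to x$ one has $f(y)\to f(x)$ and $g(y)\to g(x)=f(x)$ in $E$, hence $f(y)-g(y)\to 0_x$ by continuity of subtraction on $E*E$ (axioms (ii)--(iii) of \defnref{Banach bundle}) and therefore $\|f(y)-g(y)\|\to 0$ by continuity of the norm (axiom (i)). Thus for each $\epsilon>0$ there is an open neighborhood $V$ of $x$ with $\|f(y)-g(y)\|<\epsilon$ for all $y\in V$, so that the pair $(V,g)$ witnesses $\omega(x)\le\epsilon$; letting $\epsilon\to 0$ gives $\omega(x)=0$.

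For $(ii)\Rightarrow(i)$, assuming $\omega(x)=0$, it suffices by the tube base to show that every tube $T(g,V,\epsilon)$ that is a neighborhood of $f(x)$ — so $x\in V$ and $\|f(x)-g(x)\|<\epsilon$ — contains $f(V')$ for some neighborhood $V'$ of $x$. I would put $\delta=(\epsilon-\|f(x)-g(x)\|)/3>0$, use $\omega(x)<\delta$ to obtain a continuous section $h$ on an open neighborhood $W$ of $x$ with $\|f(y)-h(y)\|<\delta$ for all $y\in W$, and observe that $y\mapsto h(y)-g(y)$ is a continuous $E$-valued section on $W\cap V$. Then $\|h(y)-g(y)\|\to\|h(x)-g(x)\|\le\|h(x)-f(x)\|+\|f(x)-g(x)\|<\delta+\|f(x)-g(x)\|$ as $y\to x$, so there is a neighborhood $V'\subseteq W\cap V$ of $x$ on which $\|h(y)-g(y)\|<\|f(x)-g(x)\|+2\delta$, whence for $y\in V'$
\[
\|f(y)-g(y)\|\le\|f(y)-h(y)\|+\|h(y)-g(y)\|<\|f(x)-g(x)\|+3\delta=\epsilon,
\]
i.e. $f(y)\in T(g,V,\epsilon)$, which proves that $f$ is continuous at $x$.

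I do not expect a serious obstacle: the work is entirely topological bookkeeping, concentrated in the implication $(ii)\Rightarrow(i)$ — having continuous local sections through prescribed points at one's disposal (guaranteed by local compactness and local Hausdorffness of $X$), using that the difference $h-g$ of two continuous sections is again a continuous section of $E$ (so that $\|h(\cdot)-g(\cdot)\|$ is a genuine continuous scalar function near $x$), and tuning the $3\delta$ estimate so that it falls strictly inside the prescribed tube.
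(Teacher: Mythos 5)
Your proof is correct: both implications are sound, the $3\delta$ bookkeeping in $(ii)\Rightarrow(i)$ works, and the reduction to a compact Hausdorff neighborhood legitimately supplies the local continuous sections and the tube base you invoke. The paper disposes of the statement by citing Fell--Doran (Proposition II.13.12) and remarking that it also follows ``directly from the comments following Definition~\ref{Banach bundle}''; your argument is precisely that direct tube-base verification written out in full, so it is essentially the paper's approach, just made explicit.
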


\begin{proof} 
This is a corollary of \cite[Proposition II.13.12]{fd:representations}; this can also be seen directly from the comments following \defnref{Banach bundle}.
\end{proof}

\begin{prop} Let $f:X\ra E$ be a section with module of continuity $\omega$ and let $U=\{x\in X: \omega(x)<\epsilon\}$, where $\epsilon>0$. Then,
\begin{enumerate}
\item $U$ is an open subset of $X$.
\item If $f$ is $G$-equivariant with respect to a continuous affine isometric action of a topological groupoid $G$, then $U$ is $G$-invariant.
\end{enumerate}
\end{prop}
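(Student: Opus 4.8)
The plan is to prove the two parts separately. Part (i) is an immediate unwinding of \defnref{module of continuity}: if $\omega(x)<\epsilon$, then by definition of the infimum there is a pair $(V,g)$, with $V$ an open neighborhood of $x$ and $g:V\ra E$ a continuous section, such that $\sup_{z\in V}\|f(z)-g(z)\|<\epsilon$. But then for \emph{every} $y\in V$ the same pair $(V,g)$ is admissible in the infimum defining $\omega(y)$, so $\omega(y)\le\sup_{z\in V}\|f(z)-g(z)\|<\epsilon$, i.e.\ $V\subseteq U$. Hence $U$ is open.

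For part (ii), fix $\gamma_0\in G$ with $x_0:=s(\gamma_0)\in U$, put $y_0:=r(\gamma_0)$, and aim to show $\omega(y_0)<\epsilon$; the invariance of $U$ then follows by applying this to every $\gamma$ with $s(\gamma)\in U$, so that $GU\subseteq U$ and hence $GU=U$. Choose $(V,g)$ as in part (i) with $\delta_0:=\sup_{z\in V}\|f(z)-g(z)\|<\epsilon$. The first step is to transport $g$ by the action: define $G_0(\gamma'):=\gamma'\cdot g(s(\gamma'))$ for $\gamma'\in s^{-1}(V)$. This is continuous on the open set $s^{-1}(V)$, being the composition of $\gamma'\mapsto(\gamma',g(s(\gamma')))$ with the action map $G*E\ra E$, and $G_0(\gamma')$ lies in the fibre $E_{r(\gamma')}$. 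Since $f$ is equivariant, $f(r(\gamma'))=\gamma'\cdot f(s(\gamma'))$, and since the action is isometric,
$$\|G_0(\gamma')-f(r(\gamma'))\|=\|\gamma'\cdot g(s(\gamma'))-\gamma'\cdot f(s(\gamma'))\|=\|g(s(\gamma'))-f(s(\gamma'))\|<\delta_0 .$$
Thus $G_0:s^{-1}(V)\ra E$ is continuous, satisfies $\pi\circ G_0=r$, and stays uniformly within $\delta_0$ of $f\circ r$.

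The difficulty is that $G_0$ need not factor through $r$, so it is not yet a section of $E$ over a neighborhood of $y_0$. To get around this I would push down only the single value $G_0(\gamma_0)\in E_{y_0}$: since $E$ has enough local continuous sections (see the discussion following \defnref{Banach bundle}), extend $G_0(\gamma_0)$ to a continuous section $\tilde g$ of $E$ over an open neighborhood $V_0$ of $y_0$ with $\tilde g(y_0)=G_0(\gamma_0)$. The function $\gamma'\mapsto\|G_0(\gamma')-\tilde g(r(\gamma'))\|$ is continuous on $s^{-1}(V)\cap r^{-1}(V_0)$ and vanishes at $\gamma_0$; so, fixing $\delta_0'$ with $\delta_0<\delta_0'<\epsilon$, there is an open neighborhood $W_1$ of $\gamma_0$ on which it is $<\delta_0'-\delta_0$. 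As $r$ is open, $V':=r(W_1)$ is an open neighborhood of $y_0$ contained in $V_0$; and for any $z\in V'$, writing $z=r(\gamma')$ with $\gamma'\in W_1$,
$$\|f(z)-\tilde g(z)\|\le\|f(r(\gamma'))-G_0(\gamma')\|+\|G_0(\gamma')-\tilde g(r(\gamma'))\|<\delta_0+(\delta_0'-\delta_0)=\delta_0' .$$
Therefore $\omega(y_0)\le\sup_{z\in V'}\|f(z)-\tilde g(z)\|\le\delta_0'<\epsilon$, so $y_0\in U$.

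The main obstacle is exactly this passage from $G_0$ to $\tilde g$: because $r^*E$ does not push forward canonically to $E$, one cannot transport the whole local section $g$ in one stroke; the remedy is to transport a single vector, extend it locally over $X$, and then recover uniform control on a neighborhood of $y_0$ from the continuity of $G_0$ together with the openness of the range map. Everything else — part (i), the continuity of $G_0$, and the triangle-inequality estimates — is routine.
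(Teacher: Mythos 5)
Your proof is correct and follows essentially the same route as the paper: transport the approximating section $g$ by the action to get a continuous section $\gamma'\mapsto\gamma' g\circ s(\gamma')$ of $r^*E$ over $s^{-1}(V)$, then approximate it near $\gamma_0$ by a genuine local section of $E$ over a neighborhood of $r(\gamma_0)$ (using openness of $r$) and conclude by the isometry/equivariance estimate. The only difference is that where the paper cites \cite[Section 5]{dd:dixmier-douady} for that approximation step, you prove it directly by extending the single vector $\gamma_0 g\circ s(\gamma_0)$ to a local section and using continuity of the norm of the difference, which is a perfectly adequate (and self-contained) substitute; the minor strictness slips ($\le\delta_0$ rather than $<\delta_0$) do not affect the conclusion since you end with $\omega(y_0)\le\delta_0'<\epsilon$.
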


\begin{proof} The first assertion results directly from the definition of the module of continuity $\omega(x)$: if $x\in U$, there is a pair $(V,g)$ where $V$ is an open neighborhood of $x$ such that $\|f(y)-g(y)\|<\epsilon$ for all $y\in V$. Then $V$ is contained in $U$.  

Let us prove $(ii)$. Let $\underline\gamma\in G$. We assume that $s(\underline\gamma)\in U$ and we will show that $r(\underline\gamma)\in U$. We fix $\epsilon'$ such that $\omega(s(\underline\gamma))<\epsilon'<\epsilon$. Then there exists an open neighborhood $V$ of $s(\underline\gamma)$ and a continuous section $g:V\ra E$ such that for all $y\in V$, $\|f(y)-g(y)\|<\epsilon'$. Let us define $\tilde h:G_V=s^{-1}(V)\ra r^*E$ by $\tilde h(\gamma)=\gamma g\circ s(\gamma)$. Let $\delta=\epsilon-\epsilon'$. Since $\tilde h$ is a continuous section of the pull-back bundle $r^*E$, according to \cite[Section 5]{dd:dixmier-douady}, there exists an open neighborhood $S\subset G_V$ of $\underline\gamma$ and a continuous section $h:r(S)\ra E$ such that for all $\gamma\in S$, we have $\|\tilde h(\gamma)-h\circ r(\gamma)\|<\delta$. Then, for all $x\in W=r(S)$, we have:
$$\begin{array}{ccc}
\|f(x)-h(x)\|&=&\|f\circ r(\gamma)-h\circ r(\gamma)\| \\
&=&\|f\circ r(\gamma)-\tilde h(\gamma)\|+ \|\tilde h(\gamma)-h\circ r(\gamma)\|\\
&=&\|\gamma f\circ s(\gamma)-\gamma g\circ s(\gamma)\|+ \delta\\
&=&\|f\circ s(\gamma)-g\circ s(\gamma)\| +\delta\\
&<&\epsilon'+\delta=\epsilon .\\
\end{array}$$
This shows that $\omega(r(\underline\gamma))<\epsilon$ and $r(\underline\gamma)\in U$.

\end{proof}

\begin{cor}Let $f:X\ra E$ be a $G$-equivariant section of a Banach bundle $E$ endowed with an affine isometric continuous action of a topological groupoid $G$. Then the set of  points of continuity of $f$ is a countable intersection of open invariant subsets.
\end{cor}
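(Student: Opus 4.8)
The plan is to combine the two preceding propositions with the trivial observation that a countable intersection of sublevel sets of a nonnegative function is its zero set. First recall that, by the proposition characterizing continuity via the module of continuity, $f$ is continuous at a point $x$ precisely when $\omega(x)=0$. Hence the set $C$ of points of continuity of $f$ can be written as
$$C=\{x\in X:\omega(x)=0\}=\bigcap_{n=1}^{\infty}\{x\in X:\omega(x)<1/n\}.$$
The inclusion from left to right is obvious, and the reverse inclusion holds because $\omega(x)\ge 0$ by definition of the module of continuity, so $\omega(x)<1/n$ for all $n\ge 1$ forces $\omega(x)=0$.

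Next, for each integer $n\ge 1$ put $U_n=\{x\in X:\omega(x)<1/n\}$. By part $(i)$ of the preceding proposition, each $U_n$ is an open subset of $X$. Since by hypothesis $f$ is $G$-equivariant with respect to a continuous affine isometric action of $G$, part $(ii)$ of that same proposition shows that each $U_n$ is $G$-invariant. Therefore $C=\bigcap_{n=1}^{\infty}U_n$ exhibits the set of points of continuity of $f$ as a countable intersection of open invariant subsets of $X$, which is exactly the assertion.

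I do not expect any real obstacle here: the statement is essentially a bookkeeping consequence of the machinery already developed, the only things to check being the two set-theoretic equalities describing $C$ (both immediate from $\omega\ge 0$) and that the sublevel sets $U_n$ are precisely the sets to which the openness-and-invariance proposition applies. It is worth noting that this corollary needs no extra hypothesis on the base space $X$ beyond local compactness, and no hypothesis on $G$ beyond being a topological groupoid acting continuously and isometrically; in particular metrizability or compactness of $X$ is not used at this stage.
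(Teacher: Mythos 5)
Your proof is correct and follows exactly the paper's argument: write the continuity set as $\bigcap_n U_n$ with $U_n=\{x\in X:\omega(x)<1/n\}$ and invoke the preceding proposition for openness and $G$-invariance of each $U_n$. No issues.
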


\begin{proof} We define $U_n=\{u\in E: \omega(u)<1/n\}$. Then, we just observe that the set of points of continuity of $f$ is the intersection of the $U_n$ 's.
\end{proof}

\begin{cor}\label{inv} Let $f:X\ra E$ be a $G$-equivariant section of a Banach bundle $E$ endowed with an affine isometric continuous action of a topological groupoid $G$. If $G$ is minimal, either $f$ is continuous or has no point of continuity.
\end{cor}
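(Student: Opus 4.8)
The plan is to deduce this immediately from the two preceding corollaries together with the minimality hypothesis, so that essentially no new work is needed. Recall that the corollary just before this one writes the set $C$ of points of continuity of $f$ as the countable intersection $C=\bigcap_{n\ge 1}U_n$, where $U_n=\{x\in X:\omega(x)<1/n\}$ and $\omega$ is the module of continuity of $f$ from \defnref{module of continuity}; and the proposition preceding that corollary shows each $U_n$ is open and, because $f$ is $G$-equivariant with respect to an affine isometric continuous action, $G$-invariant. All the substance is already in place, and what remains is a short dichotomy argument.

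First I would invoke the minimality of $G$: since $\emptyset$ and $X$ are the only open invariant subsets of $X$, each $U_n$ equals $\emptyset$ or $X$. Next I would observe that the family $(U_n)$ is nested, $U_{n+1}\subseteq U_n$ (because $1/(n+1)<1/n$), so these two possibilities cannot alternate arbitrarily. Hence there are exactly two cases. If $U_n=X$ for every $n$, then $C=\bigcap_n U_n=X$ and $f$ is continuous at every point of $X$. Otherwise $U_N=\emptyset$ for some $N$, and then $C\subseteq U_N=\emptyset$, so $f$ has no point of continuity. In either case we obtain the asserted alternative.

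I do not expect any genuine obstacle at this stage. The real content lies in the earlier results: the openness of $U_n$ is immediate from the definition of $\omega$, while its $G$-invariance rests on the equivariance of $f$ together with the existence of enough continuous sections of the pull-back bundle $r^*E$, and both of these were handled in the preceding proposition. The only thing to verify here is that minimality forces each $U_n$ into one of the two trivial options, and that, along the nested family, these options cannot combine so as to produce a nonempty proper set of continuity points.
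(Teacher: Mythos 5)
Your proof is correct and follows the same route as the paper: minimality forces each open invariant set $U_n$ to be $\emptyset$ or $X$, and the dichotomy follows (the paper states it contrapositively: a single point of continuity makes every $U_n$ non-empty, hence equal to $X$). The remark about the $U_n$ being nested is true but not needed for the conclusion.
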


\begin{proof} If $f$ has at least one point of continuity, the $U_n$'s are non-empty. By minimality, they are all equal to $X$. Therefore the set of points of continuity of $f$ is $X$.
\end{proof}

Let us recall a general property of Hilbert C*-modules over C*-algebras.

\begin{prop}\label{approximation}\cite[Theorem 3.1]{ble:C*-modules} Let $\mathcal E$ be a  C*-module over a C*-algebra $A$. Then there exists a directed set $I$, a net of integers $(n_i)$ and nets of contractive $A$-linear maps $\varphi_i:{\mathcal E}\ra A^{n_i}$, of the form $\varphi_i(f)=(<g_{i,1},f>,\ldots,<g_{i,n_i},f>)$ with $g_{i,1},\ldots,g_{i,n_i}\in {\mathcal E}$, and $\psi_i: A^{n_i}\ra{\mathcal E}$ such that for all $f\in{\mathcal E}$, $\psi_i\circ\varphi_i(f)$ tends to $f$. Moreover, if $\mathcal E$ is countably generated, one can choose $I=\N$.
\end{prop}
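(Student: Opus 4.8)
The plan is to reduce the proposition to the existence of a special kind of approximate identity for the C*-algebra $\mathcal K(\mathcal E)$ of compact operators on $\mathcal E$, and then to construct such an approximate identity. For $u,v\in\mathcal E$ write $\theta_{u,v}$ for the rank-one operator $\theta_{u,v}(w)=u\langle v,w\rangle$, let $\mathcal F(\mathcal E)$ be the linear span of the $\theta_{u,v}$, and let $\mathcal K(\mathcal E)=\overline{\mathcal F(\mathcal E)}$. Given $g_1,\dots,g_n\in\mathcal E$, the operator $T=\sum_{k=1}^n\theta_{g_k,g_k}$ is positive and equals $RR^*$, where $R\colon A^n\ra\mathcal E$ is $R(a)=\sum_k g_ka_k$; hence $\|T\|=\|R^*R\|=\|[\langle g_j,g_k\rangle]_{j,k}\|_{M_n(A)}$. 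With $\varphi=R^*$, that is $\varphi(f)=(\langle g_1,f\rangle,\dots,\langle g_n,f\rangle)$, and $\psi=R$, one has $\psi\circ\varphi=T$ and $\|\varphi\|=\|\psi\|=\|T\|^{1/2}$. Thus the whole statement follows once we exhibit a net $(T_i)_{i\in I}$ of operators of the form $T_i=\sum_{k=1}^{n_i}\theta_{g_{i,k},g_{i,k}}$ with $\|T_i\|\le 1$ and $T_if\ra f$ for every $f\in\mathcal E$, with $I=\N$ when $\mathcal E$ is countably generated.

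First I would note that $\mathcal K(\mathcal E)$ acts non-degenerately on $\mathcal E$: for $f\in\mathcal E$ one has $f=\lim_n f\langle f,f\rangle^{1/n}$ (apply the functional calculus to $\langle f,f\rangle$ and use that $t(1-t^{1/n})^2\ra 0$ uniformly on bounded intervals), and each $f\langle f,f\rangle^{1/n}$ lies in $\overline{\mathcal K(\mathcal E)\mathcal E}$. Consequently, any approximate unit of $\mathcal K(\mathcal E)$ consisting of positive contractions acts on $\mathcal E$ as a point-norm approximation of the identity, and (since $\|e^2f-f\|\le 2\|ef-f\|$ for $\|e\|\le 1$) it is enough to approximate, on any prescribed finite set of vectors, a positive element of the dense subalgebra $\mathcal F(\mathcal E)$ of norm $\le 1$ by an operator $\sum_k\theta_{g_k,g_k}$ of norm $\le 1$; the passage from $\mathcal K(\mathcal E)$ to $\mathcal F(\mathcal E)$ is made by replacing a positive contraction $e$ of $\mathcal K(\mathcal E)$ by $d^*d$ for $d\in\mathcal F(\mathcal E)$ close to $e$ and rescaling.

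The crux is the lemma: \emph{every positive $a\in\mathcal F(\mathcal E)$ is a norm-limit of operators $\sum_{k=1}^n\theta_{g_k,g_k}$ with $\|\sum_k\theta_{g_k,g_k}\|\le\|a\|$}. I would prove it by reduction to a matrix algebra. Pick $w_1,\dots,w_N\in\mathcal E$ whose closed span contains the range of $a$; then $\widetilde W\colon A^N\ra\mathcal E$, $\widetilde W(\xi)=\sum_k w_k\xi_k$, is adjointable with $\widetilde W^*f=(\langle w_k,f\rangle)_k$ and $a=\widetilde WB\widetilde W^*$ for some self-adjoint $B\in M_N(A)$. Approximating $a^{1/2}$ in norm by polynomials $q_m$ with $q_m(0)=0$ (so that $q_m(a)\in\mathcal F(\mathcal E)$) and using that any power or polynomial of $a$ stays of the form $\widetilde W(\,\cdot\,)\widetilde W^*$, one gets $q_m(a)^2=\widetilde WP_m\widetilde W^*$ with $P_m\ge 0$ in $M_N(A)$; writing $P_m=\sum_l v_lv_l^*$ (the $v_l$ being the columns of $P_m^{1/2}$) and using $\widetilde W\theta_{v,v}\widetilde W^*=\theta_{\widetilde Wv,\widetilde Wv}$, one obtains $q_m(a)^2=\sum_l\theta_{\widetilde Wv_l,\widetilde Wv_l}$, while $q_m(a)^2\ra a$ and $\|q_m(a)^2\|\ra\|a\|$; a harmless rescaling by a scalar tending to $1$ restores the bound $\le\|a\|$. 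Granting this, I would index $I$ by pairs $(F,\epsilon)$ with $F\subset\mathcal E$ finite and $\epsilon>0$, ordered by inclusion of $F$ and reverse order of $\epsilon$, and for each such pair produce, through the chain above, a positive $a\in\mathcal F(\mathcal E)$ with $\|a\|\le 1$ and $\|af-f\|$ small on $F$, then replace it by an operator $T_i=\sum_k\theta_{g_{i,k},g_{i,k}}$ with $\|T_i\|\le 1$ and $\|T_if-f\|<\epsilon$ for all $f\in F$; together with the associated $\varphi_i,\psi_i$ this is the required net. When $\mathcal E$ is countably generated, $\mathcal K(\mathcal E)$ is $\sigma$-unital, so it has a countable approximate unit, and the vectors $w_1,\dots,w_N$ above may be taken among finitely many elements of a fixed countable generating set; running the construction over $\N$ gives the last assertion.

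I expect the main obstacle to be this lemma. The point is that in the representation $a=\widetilde WB\widetilde W^*$ the matrix $B$ is only self-adjoint — its negative part need not be annihilated by $\widetilde W$, so $B$ cannot simply be replaced by a positive matrix — and the device of passing to $q_m(a)^2$ is exactly what forces the inner matrix to be positive, at the cost of an approximation whose effect on the norm one must track through the conjugation by $\widetilde W$.
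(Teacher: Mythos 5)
The paper does not prove this statement at all: it is quoted verbatim from Blecher, \cite[Theorem 3.1]{ble:C*-modules}, so there is no internal proof to match yours against. Your reconstruction is, in substance, the standard argument behind Blecher's theorem: everything reduces to producing a contractive approximate unit of ${\mathcal K}({\mathcal E})$ whose terms have the special form $T=\sum_k\theta_{g_k,g_k}$, since then $\varphi=R^*$, $\psi=R$ with $R(a)=\sum_k g_ka_k$ gives $\psi\circ\varphi=T$ and $\|\varphi\|=\|\psi\|=\|T\|^{1/2}\le 1$; your reduction, the nondegeneracy of the ${\mathcal K}({\mathcal E})$-action, and the passage from ${\mathcal K}({\mathcal E})$ to ${\mathcal F}({\mathcal E})$ are all correct, as is the treatment of the countably generated case via $\sigma$-unitality of ${\mathcal K}({\mathcal E})$ (that fact is proved independently, from a strictly positive element $\sum_n\theta_{x_n,x_n}$, so there is no circularity). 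Where you diverge from the usual route is the key lemma: the classical shortcut is to take $u=\sum_k\theta_{x_k,x_k}$ and put $v_k=(u+\epsilon)^{-1/2}x_k$, so that $\sum_k\theta_{v_k,v_k}=(u+\epsilon)^{-1/2}u(u+\epsilon)^{-1/2}$ is already of the required form, has norm $\le 1$, and moves each $x_j$ by at most $\sqrt{\epsilon}/2$; your matrix-conjugation argument is more roundabout but achieves the same thing.

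Two points in that lemma need tightening, though both are easily repaired. First, the assertion ``pick $w_1,\dots,w_N$ whose closed span contains the range of $a$; then $a=\widetilde W B\widetilde W^*$ with $B$ self-adjoint'' is not correct under the range hypothesis alone (e.g.\ in ${\mathcal E}=A=C_0((0,1])$ with $a=\theta_{u,v}+\theta_{v,u}$, $u(t)=t$, $v(t)=\sqrt t$, the single vector $w_1=u$ spans a module containing the range of $a$, yet no such $B$ exists). What you must do --- and what the rest of your argument silently uses --- is take the $w_k$ to be \emph{all} the vectors occurring in a finite-rank representation $a=\sum_j\theta_{x_j,y_j}$; then $a=\widetilde W B\widetilde W^*$ holds with $B$ a self-adjoint scalar matrix. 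Second, for non-unital $A$ this $B$ lies in $M_N(\widetilde A)$ rather than $M_N(A)$; this is harmless, because what your argument actually needs is that $P_m=B_m'\,\widetilde W^*\widetilde W\,B_m'$ is positive and lies in $M_N(A)$ (it does, since $\widetilde W^*\widetilde W=[\langle w_j,w_k\rangle]\in M_N(A)$ and $M_N(A)$ is an ideal in $M_N(\widetilde A)$), after which the column decomposition of $P_m^{1/2}$ and the identity $\widetilde W(vv^*)\widetilde W^*=\theta_{\widetilde Wv,\widetilde Wv}$ give $q_m(a)^2=\sum_l\theta_{\widetilde Wv_l,\widetilde Wv_l}$ as you claim. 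With these corrections the proof is complete.
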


Note that the maps $\varphi_i$ and $\psi_i$ of the above proposition are adjointable maps from a C*-module to another C*-module.

\begin{cor}\label{weak-strong} Let $E\ra X$ be a Hilbert bundle. Assume that $X$ is compact and that the bundle space $E$ is second countable. Then the set of points of continuity of each weakly continuous section $f:X\ra E$  is a dense $G_\delta$.
\end{cor}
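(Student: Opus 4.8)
Since a section of a Banach bundle is continuous at a point $x$ if and only if its module of continuity $\omega(x)$ (Definition~\ref{module of continuity}) vanishes, the set of points of continuity of $f$ equals $\bigcap_{m\ge 1}U_m$ with $U_m=\{x\in X:\omega(x)<1/m\}$, and each $U_m$ is open by the Propositions following Definition~\ref{module of continuity}. Since a compact Hausdorff space is a Baire space, it therefore suffices to prove that each $U_m$ is dense; and for that I would produce honestly (strongly) continuous sections $\tilde f_i\in C(X,E)$ with $\tilde f_i(x)\to f(x)$ in norm for every $x$, and then run the classical Baire argument for a pointwise limit of continuous maps.

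To build the $\tilde f_i$, apply Proposition~\ref{approximation} to the Hilbert $C(X)$-module $\mathcal E=C(X,E)$, which is countably generated because $E$ is second countable; note that second countability of $E$ also forces $X$ to be metrizable, so Proposition~\ref{weak section} applies and $\|f\|$ is bounded on $X$. This yields contractive $C(X)$-linear maps $\varphi_i(g)=(\langle g_{i,1},g\rangle,\dots,\langle g_{i,n_i},g\rangle)$ and $\psi_i:C(X)^{n_i}\to\mathcal E$ with $\psi_i\varphi_i(g)\to g$ in $\mathcal E$ for all $g\in\mathcal E$. Put $s_{i,k}=\psi_i(0,\dots,1,\dots,0)\in C(X,E)$ (legitimate, $C(X)$ being unital), so that $\psi_i\varphi_i(g)(x)=\sum_k(g_{i,k}(x)|g(x))\,s_{i,k}(x)$; a direct computation shows the adjoint $\varphi_i^*\psi_i^*=(\psi_i\varphi_i)^*$ is given by $(\varphi_i^*\psi_i^*h)(x)=\sum_k(s_{i,k}(x)|h(x))\,g_{i,k}(x)$. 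Since $f$ is weakly continuous, each $x\mapsto(s_{i,k}(x)|f(x))$ is continuous by Proposition~\ref{weak section}, so
$$\tilde f_i(x)=\sum_{k=1}^{n_i}(s_{i,k}(x)|f(x))\,g_{i,k}(x)$$
defines a section $\tilde f_i\in C(X,E)$; informally, $\tilde f_i$ is ``$\varphi_i^*\psi_i^*$ evaluated at $f$''.

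Next I would show $\tilde f_i(x_0)\to f(x_0)$ in norm for each fixed $x_0$. Pairing the convergence $\psi_i\varphi_i(h)\to h$ in $\mathcal E$ against $f$ and using $\|f\|_\infty<\infty$ gives $\langle f,\psi_i\varphi_i(h)\rangle\to\langle f,h\rangle$ uniformly on $X$; unwinding the formulas one checks $\langle f,\psi_i\varphi_i(h)\rangle(x)=(\tilde f_i(x)|h(x))$, so $(\tilde f_i(x)|h(x))\to(f(x)|h(x))$ uniformly in $x$, for every $h\in C(X,E)$. Choosing sections $h$ with arbitrarily prescribed value at $x_0$ (they exist since $X$ is paracompact), this says $\tilde f_i(x_0)\to f(x_0)$ weakly in the Hilbert space $E_{x_0}$; in particular $(\tilde f_i(x_0)|f(x_0))\to\|f(x_0)\|^2$. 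On the other hand $\tilde f_i(x_0)=(\varphi_i^*\psi_i^*h)(x_0)$ for any such $h$, because that value depends only on $h(x_0)$; and as $\varphi_i^*\psi_i^*$ is a $C(X)$-linear contraction, a standard localization (multiply by bump functions, using compactness of $X$) gives the fiberwise bound $\|\tilde f_i(x_0)\|\le\|h(x_0)\|=\|f(x_0)\|$. Therefore
$$\|\tilde f_i(x_0)-f(x_0)\|^2=\|\tilde f_i(x_0)\|^2-2\,\mathrm{Re}\,(\tilde f_i(x_0)|f(x_0))+\|f(x_0)\|^2\le 2\|f(x_0)\|^2-2\,\mathrm{Re}\,(\tilde f_i(x_0)|f(x_0))\longrightarrow 0.$$

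Finally, fix $m$ and for $N\in\N$ set $G_N=\{x\in X:\|\tilde f_i(x)-\tilde f_j(x)\|\le 1/m\text{ for all }i,j\ge N\}$. Each $G_N$ is closed, since the $\tilde f_i$ are strongly continuous and the norm is continuous on $E$, and $\bigcup_N G_N=X$ because $(\tilde f_i(x))_i$ is norm-Cauchy for every $x$; hence $\bigcup_N\mathrm{int}(G_N)$ is dense (and open) in $X$. If $x\in\mathrm{int}(G_N)=:V$, then $\|f(y)-\tilde f_N(y)\|=\lim_j\|\tilde f_j(y)-\tilde f_N(y)\|\le 1/m$ for all $y\in V$, so the pair $(V,\tilde f_N|_V)$ witnesses $\omega(x)\le 1/m$. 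Thus $\bigcup_N\mathrm{int}(G_N)\subseteq\{\omega\le 1/m\}$; applying this with $m+1$ in place of $m$ shows that each $U_m=\{\omega<1/m\}$ is dense, which completes the argument. The step I expect to be most delicate is the comparison of $f$ with the $\tilde f_i$ --- verifying that $\varphi_i^*\psi_i^*$ is genuinely a fiberwise-localizing contraction, and tracking the inner-product convention (linear in the second variable) through the identity $\langle f,\psi_i\varphi_i(h)\rangle(x)=(\tilde f_i(x)|h(x))$, since it is this identity that lets one feed the merely weakly continuous $f$ into an approximation property stated for continuous sections only.
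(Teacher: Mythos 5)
Your proof is correct, and its core is the same as the paper's: apply Blecher's approximation property (Proposition~\ref{approximation}) to the countably generated $C(X)$-module $C(X,E)$, use weak continuity of $f$ to turn the resulting fiberwise-defined maps into genuinely continuous sections converging pointwise in norm to $f$, and finish with a Baire category argument. You deviate in two places. First, the paper simply applies $\psi_i\varphi_i$ fiberwise to $f$, i.e.\ takes $f_i(x)=\sum_k (g_{i,k}(x)|f(x))\,s_{i,k}(x)$, for which pointwise norm convergence $f_i(x)\to f(x)$ is immediate from $\psi_i\varphi_i(h)\to h$ (evaluate at a continuous section through $f(x)$); you instead apply the adjoint $(\psi_i\varphi_i)^*$ fiberwise, which makes the pointwise convergence non-trivial and forces the extra work (uniform pairing against $f$, the fiberwise contraction bound via bump functions, the boundedness of $\|f\|$ from \propref{weak section}, and the weak-convergence-plus-norm estimate). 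That detour is valid but avoidable: with the paper's choice, none of it is needed. Second, where the paper concludes by invoking the classical theorem of Baire on pointwise limits of continuous maps into a metric space -- which requires knowing $E$ is metrizable, obtained from the Dixmier--Douady embedding of $E$ into a trivial bundle $X\times F$ -- you reprove that statement by hand in the bundle setting, using the module of continuity of \defnref{module of continuity}, the open sets $U_m=\{\omega<1/m\}$, and the closed sets $G_N$; this is slightly longer but self-contained and has the small advantage of not needing metrizability of the total space $E$ (only that the index set can be taken to be $\N$, which your remark on countable generation does justify). Both routes are sound; the paper's is shorter, yours is more elementary at the final step.
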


\begin{proof} We apply \propref{approximation} to the C*-module ${\mathcal E}=C(X,E)$ over the C*-algebra $A=C(X)$. According to \cite[Proposition II.13.21]{fd:representations}, it is countably generated. Because the functor $E\mapsto C(X,E)$ gives an equivalence between the category of Hilbert bundles over $X$  and the category of Hilbert C*-modules over $C(X)$, we obtain a sequence of continuous bundle maps $\varphi_i:E\ra X\times \C^{n_i}$ and $\psi_i: X\times \C^{n_i}\ra E$ such that for all $u\in E$, $\psi_i\circ\varphi_i(u)$ tends to $u$. 

Let $f$ be a weakly continuous section of $E$. Then $\varphi_i\circ f: X\ra X\times \C^{n_i}$, which is of the form $x\mapsto (x,((g_{i,1}(x)|f(x)),\ldots,(g_{i,n_i}(x),f(x)))$, where $g_{i,1},\ldots,g_{i,n_i}\in C(X,E)$, is continuous. Therefore $f$ is the pointwise limit of the sequence of the continuous sections $f_i=\psi_i\circ(\varphi_i\circ f)$. Our assumption implies that $X$ is compact and second countable, hence metrizable. The space $E$ is also metrizable since, according to \cite{dd:dixmier-douady}, it can be embedded into a trivial bundle $X\times F$, where $F$ is a Hilbert space. According to a theorem of Baire, the set of points of continuity of $f:X\ra E$  is a dense $G_\delta$.
\end{proof}

\begin{cor}  Let $E\ra X$ be a Hilbert bundle endowed with an affine isometric continuous action of a topological groupoid $G$. Assume that $X$ is compact, the bundle space $E$ is second countable and $G$ is minimal. Then each $G$-equivariant section $f:X\ra E$ which is weakly continuous is necessarily strongly continuous.
\end{cor}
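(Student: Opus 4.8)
The plan is simply to splice together the two corollaries just established. By \corref{weak-strong}, since $X$ is compact and the bundle space $E$ is second countable, the set of points of continuity of the weakly continuous section $f$ is a dense $G_\delta$ subset of $X$; being a dense $G_\delta$ in a non-empty (compact metrizable, hence Baire) space, it is in particular non-empty. Thus $f$ has at least one point of continuity.

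Next I would invoke \corref{inv}: as $f$ is a $G$-equivariant section of the Banach bundle $E$ (a Hilbert bundle is a Banach bundle), and $G$ is minimal, the alternative there forces $f$ to be either strongly continuous on all of $X$ or nowhere continuous. Having already ruled out the second possibility, I conclude that $f$ is strongly continuous everywhere, which is the assertion.

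There is no genuine obstacle at this stage; all the substance lives in the two inputs. \corref{weak-strong} is where the hypothesis that $E$ is second countable is used, via the approximation property of the countably generated Hilbert $C(X)$-module $C(X,E)$ (\propref{approximation}) together with Baire's theorem; \corref{inv} is where minimality is used, via the fact that the sublevel sets $U_n=\{x: \omega(x)<1/n\}$ of the module of continuity (\defnref{module of continuity}) are open and $G$-invariant, so that one of them being non-empty forces all of them to equal $X$. The one point worth flagging explicitly is that ``point of continuity'' means \emph{strong} (norm) continuity throughout, since $\omega$ is defined through the norm; this is precisely the notion occurring in both corollaries, so the two statements fit together with nothing left to check.
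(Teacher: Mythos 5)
Your argument is correct and coincides with the paper's own proof: \corref{weak-strong} gives a point of (norm) continuity, and \corref{inv} then upgrades this to continuity everywhere by minimality. Nothing further is needed.
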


\begin{proof} \corref{weak-strong} shows that $f$ has at least one point of continuity. \corref{inv} says then that $f$ is continuous at all $x\in X$.
\end{proof}

\section{ Concluding remarks}

C. Anantharaman-Delaroche gives in \cite[Theorem 3.19]{ana:T} a measure theoretic version of \thmref{main}. Her proof is based on the ``lemma of the centre'' \cite[Lemma 2.2.7]{bhv:T}. Let us assume that $E$ is a continuous $G$-Hilbert bundle and $c:G\ra r^*E$ is a bounded continuous cocycle. Then we can define for all $x\in X$, $f(x)$ as the centre of $c(G^x)$. Then, since $L(\gamma)$ is an isometry from $E_{s(\gamma)}$ to $E_{r(\gamma)}$, we have $c(\gamma)=f\circ r(\gamma)-L(\gamma)f\circ s(\gamma)$ for all $\gamma\in G$. In the measure theoretical framework, $f$ is measurable; however, its continuity is problematic (see \cite[Example 17]{cnp:bdd orbits}).

J.-L. Tu shows in \cite[3.3]{tu:conjecture} that, just as in the case of groups, every conditionally negative type continuous function $\psi: G\ra \R$ on a topological groupoid $G$ is of the form $\psi(\gamma)=\|c(\gamma)\|^2$, where $c: G\ra r^*E$ is a continuous cocycle of a continuous $G$-Hilbert bundle $E$. \thmref{main} shows that the condition that every conditionally negative type continuous function is bounded has the same cohomological interpretation for groupoids as for groups.

\vskip 5mm
{\it Acknowledgements.} I thank C.~Anantharaman-Delaroche and E.~Blanchard for their help in eliminating some obscurities in a preliminary draft of the manuscript.

\vskip3mm

\end{document}